\date{\today}
\theoremstyle{theorem}
    \newtheorem{theorem}{Theorem}
    \newtheorem{lemma}{Lemma}
\theoremstyle{definition} 
    \newtheorem{fact}{Fact}
    \newtheorem{result}{Result}
    \newtheorem{remark}{Remark}
    \newtheorem{example}[theorem]{Example}
    \newtheorem{exercise}[theorem]{Exercise}
    \newtheorem{assumption}{Assumption}
\def\d{\delta}
\def\e{\epsilon}
\def\vp{\varphi}
\def\C{\mathbb{C}}
\def\N{\mathbb{N}}
\def\R{\mathbb{R}}
\def\l{\left}
\def\r{\right}
\def\<{\langle}
\def\>{\rangle}
\newcommand{\E}{\mbox{\bf E}}
\def\I#1{{\bf 1}_{#1}}
\def\bar{\overline}
\newcommand\Tr{{\mbox{Tr}}}
\newcommand\tr{{\mbox{tr}}}
\newcommand\mnote[1]{} 
\newcommand\be{\begin{equation*}}
\newcommand\ee{\end{equation*}}
\newcommand\ben{\begin{equation}}
\newcommand\een{\end{equation}}
\newcommand\bes{\begin{eqnarray*}}
\newcommand\ees{\end{eqnarray*}}
\newcommand\bex{\begin{exercise}}
\newcommand\eex{\end{exercise}}
\newcommand\beg{\begin{example}}
\newcommand\eeg{\end{example}}
\newcommand\benu{\begin{enumerate}}
\newcommand\eenu{\end{enumerate}}
\newcommand\beit{\begin{itemize}}
\newcommand\eeit{\end{itemize}}
\newcommand\berk{\begin{remark}}
\newcommand\eerk{\end{remark}}
\newcommand\bdefn{\begin{defintion}}
\newcommand\edefn{\end{definition}}
\newcommand\bthm{\begin{theorem}}
\newcommand\ethm{\end{theorem}}
\newcommand\bprf{\begin{proof}}
\newcommand\eprf{\end{proof}}
\newcommand\blem{\begin{lemma}}
\newcommand\elem{\end{lemma}}
\newcommand{\sm}{{\raise0.3ex\hbox{$\scriptstyle \setminus$}}}
\def\l{\left}
\def\r{\right}
\def\CHI{\mathchoice%
{\raise2pt\hbox{$\chi$}}%
{\raise2pt\hbox{$\chi$}}%
{\raise1.3pt\hbox{$\scriptstyle\chi$}}%
{\raise0.8pt\hbox{$\scriptscriptstyle\chi$}}}
\def\smalloplus{\raise1pt\hbox{$\,\scriptstyle \oplus\;$}}
\title{Brown measure and asymptotic freeness of elliptic and related matrices}
\author{Kartick Adhikari}
\address{Stat-Math Unit\\
        Indian Statistical Institute\\
        Kolkata 700108, India}
\email{kartickmath [at] gmail.com}
\author{Arup Bose}
\address{Stat-Math Unit\\
        Indian Statistical Institute\\
        Kolkata 700108, India}
\email{bosearu [at] gmail.com}
\date{\today}
\thanks{The work is partially supported by National Post-Doctoral Fellowship, India, with reference no. {\bf PDF/2016/001601}, and also  supported by J. C. Bose National Fellowship, Department of Science and Technology, Government of India}
\begin{document}
\maketitle

\begin{abstract}
We show that independent elliptic matrices converge to freely independent elliptic elements. Moreover, the elliptic matrices are asymptotically free with deterministic matrices under appropriate conditions. We compute the Brown measure of the product of elliptic elements. It turns out that this Brown measure is same as the limiting spectral distribution.
\end{abstract}

\section{Introduction}

\textit{Asymptotic $*$-freeness} (in short freeness) of a sequence of random matrices, as the dimension increases, was introduced by Voiculescu \cite{voiculescu1}. It is a central object of study in free probability. 
Later it has been studied extensively in the literature by \cite{dykema}, \cite{hiai}, \cite{dimitri}, \cite{speicher},  \cite{voiculescu2} and others. In particular it is known that under suitable assumptions, independent standard unitary matrices and deterministic matrices are  asymptotically free, and so are independent Wigner and deterministic matrices. 
Further, let $Y_{p\times n}$ be a $p\times n$ random rectangular matrix where the entries  are i.i.d. Gaussian random variables with mean zero and variance one. It is known that (see Theorem 5.2 in \cite{capitaine}) independent copies of $\frac{1}{n}Y_{p\times n}Y_{n\times p}^\prime$ are also asymptotically free.

A generalisation of the Wigner matrix that has caught recent attention is the elliptic matrix where the entries are as in a Wigner matrix except that the $(i,j)$th and $(j,i)$th entries have a correlation which is same across all pairs. This is clearly a non-symmetric matrix. One may also allow some specific pattern of correlation instead of the constant correlation. We call the latter a generalised elliptic matrix. It is known that under suitable conditions the limit spectral distribution (LSD) of the elliptic matrix is the uniform distribution on an ellipse whose axes depend on the value of the correlation. See \cite{naumov}, \cite{sommer},  \cite{nguyen}.

We first show that under suitable conditions, the sequence of generalised elliptic matrices converges in $*$-distribution 
(see Theorem \ref{thm:general}). In particular any sequence of elliptic matrices converges to an elliptic element.
 
Next we show  that  under appropriate conditions, independent elliptic matrices, with possibly different correlation values, converge jointly in $*$-distribution and are asymptotically  free. They are also asymptotically free of appropriate collection of deterministic matrices. See Theorem \ref{thm2}.
The joint convergence should remain true for generalised elliptic matrices but freeness is not expected to remain valid. To keep things simple we decided not to pursue these ideas. 

Now consider the empirical spectral distribution (ESD) of $\frac{1}{n}Y_{p\times n}Y_{n\times p}^\prime$ when $p/n \to y\neq 0$. 
When the entries of $Y_{p\times n}$ are i.i.d. random variables with mean zero, variance one and all moments finite, this  converges  to the Mar\v cenko-Pastur law almost surely.  Other variations under weaker assumptions are also known. See \cite{baisilversteinbook}, \cite{pastur}, \cite{wachter} and \cite{yin}. Now suppose $Y_{p \times n}$ is elliptic. We show that then  the \textit{expected} ESD still converges to the Mar\v cenko-Pastur law. See Theorem \ref{thm:rectangle}. Again, we have not pursued the almost sure convergence of the ESD for simplicity. We also show that independent copies of $\frac{1}{n}Y_{p\times n}Y_{n\times p}^\prime$ converge jointly in $*$-distribution and are asymptotically free. See Theorem \ref{thm:assymrect}.

The Brown measure for any element of a non-commutative probability space was introduced by Brown \cite{brown}. There has been a lot of work done in the past decade to find connection between the limiting spectral distribution (LSD) of a sequence of random matrices and the Brown measure of the $*$-distribution limit of the sequence. Often they are not equal. A very simple example is given in \cite{sniady}. 

However, often they are equal. For example the i.i.d. matrix converges in $*$-distribution to the circular element and its LSD is the uniform distribution on the unit disc. The latter is indeed the Brown measure of the circular element. See \cite{haagerup}. 
The LSD of  the elliptic matrix is the uniform probability measure on  an ellipse. At the same time, the Brown measure of an elliptic element is also the uniform probability measure on an ellipse (see \cite{belinschi}, \cite{larsen}). Similalry, in  \cite{manjusinglering}, it has been shown that the LSD of  bi-unitarily random matrices is actually the Brown measure of the $*$-distribution limit. 

The LSD of product of elliptic matrices has been calculated  in \cite{soshnikov}. 
On the other hand, from Theorem \ref{thm2}, we know that this product converges in $*$-distribution to product of free elliptic elements. We calculate the Brown measure of such a product and show that it is the  same as the LSD. See Theorem \ref{thm:brown}.

We introduce the basic definitions and facts in Section \ref{sec:pre} and state our results in Section \ref{sec:results}. In Sections \ref{sec:limit} and \ref{sec:asypt} we give the proofs of Theorems  \ref{thm:general} and \ref{thm2} respectively.  Proof of Theorems \ref{thm:rectangle} and \ref{thm:assymrect} are given in Section \ref{sec:rect} and the proof of Theorem \ref{thm:brown} is presented in Section \ref{sec:brown}.

\section{Preliminaries}\label{sec:pre}
We 
first recall some basic definitions and facts from  free probability theory. 
 A self-adjoint element $s$, in a non-commutative probability space (NCP) $(\mathcal A,\vp)$, is said to be a (standard) { \it semi-circular element} if $$
\vp(s^p)=\frac{1}{2\pi}\int_{-2}^2t^p\sqrt {4-t^2}dt, \;\mbox{ for $p\in \N$}.
$$ 
These are moments of the probability density $\frac{1}{2\pi}\sqrt {4-t^2}$ on the interval $[-2,\ 2]$ (see \cite{speicherbook}, p. 29). 

Elements (called random variables) $a_1, a_2,\ldots, a_n\in \mathcal A$ in an NCP $(\mathcal A, \vp)$ are said to be {\it free} if 
for any polynomials $p_1,p_2,\ldots,p_k$ whenever $\varphi(p_j(a_{i(j)}))=0, 1\le j\le k$ and $i(1)\neq i(2)\neq \cdots \neq i(k)$, we have 
$$
\varphi(p_1(a_{i(1)})\cdots p_k(a_{i(k)}))=0.
$$

An element $c\in \mathcal{A}$ is called {\it circular} if $c=\frac{1}{\sqrt 2}s_1+\frac{i}{\sqrt 2}s_2$,  where $s_1$ and $s_2$ are two free semi-circular elements. An element $e\in \mathcal A$ is called {\it elliptic} with parameter $\rho$ if $e=\sqrt{\frac{1+\rho}{2}}s_1+i\sqrt{\frac{1-\rho}{2}}s_2$, where $s_1$ and $s_2$ are free semi-circular elements. Note that $\rho=1$ and $\rho=0$ yield respectively the semi-circular and the circular element.

Let $(\mathcal A_n,\vp_n)_{n\ge 1}$ be a sequence of NCP. Let $(a_i^n)_{i\in I}$ be a collection of random variables from $\mathcal A_n$ which converges in $*$-distribution to some $(a_i)_{i\in I}$ in  $(\mathcal A, \vp)$.
Then $(a_i^{(n)})_{i\in I}$ are said to be asymptotically free if $(a_i)_{i\in I}$ are free.

Let $\mathcal A_n$ be the algebra of  $n\times n $ random matrices whose entries have all moments finite. It is equipped with the tracial state $\varphi_n(x)=\frac{1}{n}\E\Tr(x)$ for $x\in \mathcal A_n$. Clearly a sequence of random matrices $(A_n)$ from  $\mathcal A_n$, {\it converges in $*$-distribution} to some element $a\in \mathcal A$ 
if for every choice of $\epsilon_1,\epsilon_2,\ldots, \epsilon_k\in \{1, *\}$ we have 
$$
\lim_{n\to \infty}\varphi_n(A_n^{\epsilon_1}\cdots A_n^{\epsilon_k})=\varphi(a^{\epsilon_1}\cdots a^{\epsilon_k}).
$$
Then we write $A_n\stackrel{*\mbox{-dist}}{\longrightarrow}a$. 
If $A_n$ is in addition hermitian, then the above condition is same as saying 
 $\lim \vp_n(a_n^k)=\vp(a^k)$ exists  for all non-negative integers $k$. 
Then we say $A_n$ converges to $a$ in the distribution sense. 
The joint convergence of several sequences is expressed in an analogous manner. 

There are other related notions of convergence of random matrices. Let $A_n$ be an $n\times n$ random matrix with eigenvalues $\lambda_1,\ldots, \lambda_n$. Then $\frac{1}{n}\sum_{k=1}^n\d_{\lambda_k}$, where $\d_x$ denotes the Dirac delta measure at $x$, is the {\it empirical spectral measure} of $A_n$. Equivalently, the {\it empirical spectral distribution} (ESD) is given by
\begin{align*}
F^{A_n}(x,y)=\frac{|\{k\;:\; \Re(\lambda_k)\le x, \Im(\lambda_k)\le y\}|}{n},\;\;\;\mbox{ for $x,y\in \R$},
\end{align*} 
where $|\cdot|$ denotes the cardinality and $\Im$ and $\Re$ denote the imaginary and real parts.
Clearly $F^{A_n}$ is a random distribution function. 
If, as $n \to \infty$,  it converges (almost surely) to a non-random distribution function $F_{\infty}$ weakly, then $F_{\infty}$ is said to be the almost sure {\it limiting spectral distribution} (LSD) of $A_n$. 
Often it is easier to show the convergence of the non-random  {\it expected} ESD function $\E[F^{A_n}(x,y)]$. This limit is also called LSD and coincides with the earlier limit if both exist. 
If the sequence of matrices is hermitian, then its convergence in the distribution sense yields the candidate LSD whose moments are $\vp(a^k)$.  

We now state a few well known facts, which will be used in the proofs of our results. We need the following notation:
\begin{align*}
\mathcal P_2(2k)&:=\mbox{the set of all pair partitions of $\{1,2,\ldots, 2k\}$,}\\
NC_2(2k)&:=\mbox{the set of non-crossing pair partitions of $\{1,2,\ldots, 2k\}$,}
\\ \gamma &:= \mbox{the cyclic permutation with one cycle, i.e.,}
\\& \;\;\;\;\; \hspace{.1cm}\mbox{$\gamma(i)=i+1$ for $i=1,\ldots, 2k-1$ and $\gamma(2k)=1$}
\\ |A|&= \mbox{the cardinality of $A$},
\\ \gamma\pi(r) &:= \gamma(\pi(r)) \mbox{ for } \pi\in \mathcal P_2(2k), \mbox{where $\pi(r)=s$ and $\pi(s)=r$ if $(r,s)\in \pi$,}
\\I_{k}&:=\{(i_1,\ldots,i_{k})\in \N^{k}\;  :\; 1\le i_1,\ldots, i_{k} \le n\},
\end{align*}

\begin{fact}[Moments-free cumulants] \label{ft:moment-cumulant}
Let $a_1,a_2,\ldots, a_n\in (\mathcal A, \vp)$. Then 
\begin{align*}
\vp(a_1a_2\cdots a_n)=\sum_{\pi\in NC(n)}\prod_{V\in \pi}\kappa(V)[a_1,a_2,\ldots,a_n],
\end{align*}
where $NC(n)$ denotes the set of all non-crossing partition of $\{1,\ldots,n\}$ and $\kappa (V)$ denotes the usual multiplicative extension of the free cumulant function. See \cite{speicherbook} for details on the definition of free cumulants and its multiplicative extension. 
\end{fact}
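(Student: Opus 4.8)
The plan is to derive the identity by M\"obius inversion over the lattice of non-crossing partitions, following \cite{speicherbook}. Fix $n$ and recall that $NC(n)$, ordered by refinement, is a finite lattice with maximal element $1_n$ (the single-block partition), so it carries a well-defined M\"obius function $\mu(\sigma,\pi)$. For $\pi\in NC(n)$ write $\vp_\pi[a_1,\ldots,a_n]:=\prod_{V\in\pi}\vp(V)[a_1,\ldots,a_n]$, where for a block $V=\{v_1<\cdots<v_m\}$ one sets $\vp(V)[a_1,\ldots,a_n]:=\vp(a_{v_1}a_{v_2}\cdots a_{v_m})$; in particular $\vp_{1_n}[a_1,\ldots,a_n]=\vp(a_1\cdots a_n)$, which is the left-hand side of the claimed formula.

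Next I would invoke the definition of the free cumulants $(\kappa_m)_{m\ge 1}$: they are the unique family of multilinear functionals whose multiplicative extension $\kappa_\pi[a_1,\ldots,a_n]:=\prod_{V\in\pi}\kappa(V)[a_1,\ldots,a_n]$ satisfies $\vp_\pi=\sum_{\sigma\in NC(n),\,\sigma\le\pi}\kappa_\sigma$ for every $\pi$. Existence and uniqueness follow by induction on $m$: taking $\pi=1_n$ the relation reads $\vp(a_1\cdots a_n)=\kappa_n[a_1,\ldots,a_n]+(\mbox{a polynomial in the }\kappa_j\mbox{ with }j<n)$, which determines $\kappa_n$; equivalently, M\"obius inversion on $NC(n)$ gives $\kappa_\pi=\sum_{\sigma\le\pi}\mu(\sigma,\pi)\,\vp_\sigma$. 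Granting this, specialising the defining relation to $\pi=1_n$ produces
$$
\vp(a_1\cdots a_n)=\sum_{\pi\in NC(n)}\kappa_\pi[a_1,\ldots,a_n]=\sum_{\pi\in NC(n)}\prod_{V\in\pi}\kappa(V)[a_1,\ldots,a_n],
$$
which is exactly the assertion.

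The M\"obius-inversion step is routine; the substantive content of the theory — that the $\kappa_m$ so defined actually linearise freeness (mixed cumulants of free variables vanish) and agree with the coefficients of the $R$-transform — is what is cited from \cite{speicherbook} and is not reproved here. If one instead wanted a self-contained analytic argument, one could first establish the one-variable case $\vp(a^k)=\sum_{\pi\in NC(k)}\prod_{V\in\pi}\kappa_{|V|}(a)$ by expanding the functional equation $G\bigl(R(z)+z^{-1}\bigr)=z$ between the Cauchy and $R$-transforms and exploiting the self-similar (Kreweras-complement) structure of $NC(k)$, then pass to the multilinear statement by polarisation; there the main obstacle is the bookkeeping in the Lagrange-type inversion, which is precisely why the M\"obius-inversion definition is the cleaner point of departure.
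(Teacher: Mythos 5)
The paper gives no proof of this Fact: it is stated as a standard result and the reader is referred to \cite{speicherbook} (p.\ 176). Your sketch is correct and matches what that reference does — the identity is just the $\pi=1_n$ instance of the defining relation $\vp_\pi=\sum_{\sigma\in NC(n),\,\sigma\le\pi}\kappa_\sigma$, equivalently obtained from $\vp_\pi$ by M\"obius inversion on the lattice $NC(n)$ — and you correctly flag that the genuinely nontrivial content (that these cumulants linearise freeness and agree with the $R$-transform coefficients) is what is being imported from the reference rather than reproved.
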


\begin{fact}[Wick's formula]\label{ft:wick}
Let $G_1,G_2,\ldots, G_{j}$ be jointly Gaussian random variables with $\E[G_i]=0$ for $i=1,2,\ldots, j$. Then 
$$
\E[G_1G_2\cdots G_{j}]=\sum_{\pi\in \mathcal P_2(j)} \prod_{(r,s)\in \pi}\E[G_rG_s].
$$
\end{fact}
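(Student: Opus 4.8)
The plan is to prove Wick's formula by a direct computation with the joint moment generating function, which simultaneously disposes of the (trivial) odd case. Write $\Sigma = (\sigma_{rs})_{1\le r,s\le j}$ with $\sigma_{rs} = \E[G_rG_s]$; since $(G_1,\dots,G_j)$ is a centered jointly Gaussian vector, its moment generating function is
$$
M(t_1,\dots,t_j) = \E\Big[\exp\Big(\sum_{i=1}^j t_i G_i\Big)\Big] = \exp\Big(\tfrac12\sum_{r,s}\sigma_{rs}t_rt_s\Big),
$$
and the mixed moment we want is $\E[G_1G_2\cdots G_j] = \partial_{t_1}\cdots\partial_{t_j}M\big|_{t=0}$.

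Next I would expand the exponential as $\sum_{m\ge0}\frac{1}{m!}\big(\tfrac12\sum_{r,s}\sigma_{rs}t_rt_s\big)^m$ and observe that, after applying $\partial_{t_1}\cdots\partial_{t_j}$ and setting $t=0$, only the term that is a scalar multiple of the squarefree monomial $t_1t_2\cdots t_j$ contributes; this forces $2m=j$. Hence if $j$ is odd the value is $0$, matching the empty sum on the right-hand side (recall $\mathcal P_2(j)=\varnothing$ in that case), while if $j=2k$ the relevant term is
$$
\frac{1}{k!\,2^{k}}\sum_{r_1,s_1,\dots,r_k,s_k}\sigma_{r_1s_1}\cdots\sigma_{r_ks_k}\,t_{r_1}t_{s_1}\cdots t_{r_k}t_{s_k}.
$$
The coefficient of $t_1t_2\cdots t_{2k}$ is obtained by summing over the ways the ordered list $(r_1,s_1,\dots,r_k,s_k)$ is a rearrangement of $(1,2,\dots,2k)$; grouping these by the pair partition $\pi\in\mathcal P_2(2k)$ they induce, each $\pi$ arises from exactly $k!\,2^{k}$ ordered lists (permute the $k$ blocks, then order the two elements within each block), so the factors $k!$ and $2^{k}$ cancel the prefactor and we are left with $\sum_{\pi\in\mathcal P_2(2k)}\prod_{(r,s)\in\pi}\sigma_{rs}$, which is the claim.

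An alternative route is induction on $j$ via the Gaussian integration-by-parts identity $\E[G_1\,f(G_2,\dots,G_j)] = \sum_{k=2}^{j}\E[G_1G_k]\,\E[\partial_k f]$ applied to $f(x_2,\dots,x_j)=x_2\cdots x_j$: choosing the partner of the index $1$ and recursing on the remaining $2k-2$ indices reproduces the sum over $\mathcal P_2(2k)$. Either way there is no genuine obstacle — the statement is classical (Isserlis' theorem) — and the only points needing care are the bookkeeping of the combinatorial multiplicity $k!\,2^{k}$ in the first approach, and, in the second, the verification that the integration-by-parts identity holds for a possibly degenerate $\Sigma$, which follows by approximating $\Sigma$ by positive definite covariance matrices or, again, by differentiating $M$.
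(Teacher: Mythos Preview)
Your proof is correct; the moment-generating-function argument is the standard derivation of Isserlis' theorem, and your bookkeeping of the $k!\,2^k$ multiplicity is right. Note, however, that the paper does not actually prove this fact: it simply states Wick's formula as a known result and refers the reader to \cite{speicherbook}, p.~360, so there is no in-paper proof to compare against.
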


\begin{fact}\label{ft:cardianlity}Let $\pi\in\mathcal P_2(2k)$ and  $|\gamma \pi|$ denote the cardinality of the set of partition blocks in $\gamma \pi$. Then
$|\gamma \pi|\le k+1$, and equality holds if and only if $\pi \in NC_2(2k)$.
\end{fact}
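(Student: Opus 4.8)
The plan is to read both assertions off the word-length geometry of the symmetric group $S_{2k}$. Regard $\pi$ as the fixed-point-free involution of $\{1,\dots,2k\}$ determined by its blocks, so that $\gamma\pi\in S_{2k}$ and $|\gamma\pi|$ is literally its number of cycles. For $\sigma\in S_{2k}$ set $\|\sigma\|:=2k-\#(\sigma)$, the minimal number of transpositions whose product is $\sigma$; this is a metric on $S_{2k}$, so in particular $\|\sigma\tau\|\le\|\sigma\|+\|\tau\|$.

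For the inequality I would apply the triangle inequality to the identity $\gamma=(\gamma\pi)\pi$ (using $\pi^{-1}=\pi$): $\|\gamma\|\le\|\gamma\pi\|+\|\pi\|$. Since $\gamma$ is a single $2k$-cycle, $\|\gamma\|=2k-1$, and since $\pi$ is a product of $k$ disjoint transpositions, $\|\pi\|=k$. Substituting $\|\gamma\pi\|=2k-|\gamma\pi|$ gives $2k-1\le(2k-|\gamma\pi|)+k$, that is, $|\gamma\pi|\le k+1$.

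Equality $|\gamma\pi|=k+1$ is then equivalent to $\|\gamma\|=\|\gamma\pi\|+\|\pi\|$, i.e.\ to $\pi$ lying on a geodesic from the identity to the long cycle $\gamma$ in the Cayley graph of $S_{2k}$ with transposition generators. By the classical theorem of Biane (see also \cite{speicherbook}) the pairings that are geodesic points for $\gamma$ are precisely the non-crossing ones, which completes the proof. For the direction actually used later I would instead give the self-contained induction on $k$: a non-crossing pairing always contains a cyclically adjacent block $(r,r+1)$ (indices mod $2k$), and deleting those two points makes $r+1$ a fixed point of $\gamma\pi$ while $\gamma\pi$ restricted to the remaining $2k-2$ points equals $\gamma'\pi'$ for the induced long cycle $\gamma'$ and the induced (still non-crossing) pairing $\pi'$; hence $|\gamma\pi|=|\gamma'\pi'|+1$ and one concludes by the inductive hypothesis.

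The only genuine obstacle is the equality characterization: the bound is a one-line consequence of the triangle inequality, whereas pinning down the extremal pairings requires either Biane's geodesic lemma or, for a fully elementary treatment, the fact that an arbitrary block deletion satisfies $|\gamma\pi|\le|\gamma'\pi'|+1$ with equality only for adjacent blocks, together with the standard statement that a pairing is non-crossing iff it can be reduced to the empty pairing by successive removals of adjacent blocks. I would record the peeling reduction as a short lemma and cite \cite{speicherbook} for the surrounding combinatorics. As an alternative route, the whole statement also drops out of the Euler-characteristic identity $|\gamma\pi|=k+1-2g$ for the closed orientable surface obtained by gluing the sides of a $2k$-gon in pairs according to $\pi$, where $g\ge 0$ is its genus and $g=0$ exactly when the gluing is planar, i.e.\ $\pi\in NC_2(2k)$.
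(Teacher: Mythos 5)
Your argument is correct, but there is nothing in the paper to compare it against: the paper does not prove Fact~\ref{ft:cardianlity} at all, it simply refers the reader to \cite{speicherbook} (p.~367). So you are supplying a proof where the paper supplies a citation.

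That said, your route is exactly the standard one from the free-probability literature (indeed it is essentially the argument in \cite{speicherbook}). The length function $\|\sigma\|=2k-\#(\sigma)$ on $S_{2k}$ is the minimal number of transpositions expressing $\sigma$, hence subadditive, and writing $\gamma=(\gamma\pi)\pi$ with $\pi$ an involution gives $2k-1\le(2k-|\gamma\pi|)+k$, i.e.\ $|\gamma\pi|\le k+1$; equality says $\pi$ is a geodesic point between the identity and the long cycle $\gamma$, and Biane's theorem identifies those with non-crossing partitions (pairings with pairings). Your self-contained peeling induction is also sound, with one cosmetic imprecision worth tightening if you write it out: $r+1$ is a fixed point of $\gamma\pi$ \emph{before} deletion (one computes $\gamma\pi(r+1)=\gamma(r)=r+1$), and after removing $\{r,r+1\}$ from the ground set the induced $\gamma'\pi'$ agrees with $\gamma\pi$ except that the singleton $\{r+1\}$ disappears and the element $r$ is spliced out of the cycle through $\pi(r-1)\to r\to r+2$, giving $|\gamma\pi|=|\gamma'\pi'|+1$; you should say a word about why the ``only if'' direction also follows (a crossing pairing has no cyclically adjacent block to peel, or one checks directly that any non-adjacent deletion loses at least one cycle). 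The Euler-characteristic identity $|\gamma\pi|=k+1-2g$ is a third valid route and is the genus expansion that underlies the whole Fact. Any one of the three, written carefully, proves the statement; the paper buys brevity by citing, you buy self-containment.
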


For the proofs of the above three Facts 
refer to \cite{speicherbook}, pages 176, 360 and  367 respectively. To illustrate Fact \ref{ft:cardianlity}, let $\{1,\ldots,6\}$ so that 
$k=3$. Let $\pi=(13)(24)(56)$ be a crossing pair partition. Then 
$\gamma\pi=(14325)(6)$, using the convention that if $(r,s)\in \pi$ then $r=\pi(s)$ and $s=\pi(r)$, so $|\gamma\pi|=2$. On the other hand, if $\pi=(12)(34)(56)$, which is non-crossing, then $\gamma\pi=(135)(2)(4)(6)$ and $|\gamma\pi|=4$. 
  
Now we introduce the Brown measure.  Let $(\mathcal A, \vp)$ be an NCP. Then the Fuglede-Kadison  determinant (see \cite{fuglede}) $\Delta (a) $ of  $a\in \mathcal A$ is defined  by
\begin{align*}
\Delta (a)= \exp[\frac{1}{2}\vp(\log (aa^*)) ].
\end{align*}	
If $a$ is not invertible, then $\Delta (a):=\lim_{\e\to 0}\Delta_{\e} (a)$, where $\Delta_{\e} (a)$ denotes the regularized Fuglede-Kadison determinant $\Delta_{\e} (a)=\exp[\frac{1}{2}\vp(\log (aa^*+\e^2)) ]$ for $\e>0$.

The Brown measure of $a\in \mathcal A$ is defined by \cite{brown}
$$
\mu_a=\frac{1}{2\pi}\left(\frac{\partial^2}{\partial(\Re \lambda)^2}+\frac{\partial^2}{\partial(\Im \lambda)^2}\r)\log\Delta (a-\lambda)=\frac{2}{\pi}\frac{\partial}{\partial\lambda}\frac{\partial}{\partial\bar \lambda}\log\Delta (a-\lambda).
$$
One can show that in fact $\mu_a$ is a probability measure on $\C$. Consider any $n\times n$ matrix $A_n$. Then 
\begin{align*}
\Delta(A_n)=\sqrt[n]{|\det A_n|}, \mbox{ and } \mu_{A_n}=\frac{1}{n}\sum_{i=1}^n\d_{\lambda_i}
\end{align*}
respectively, where $\lambda_1,\ldots, \lambda_n$ are the eigenvalues of $A_n$. So the Brown measure is the ESD of the matrix. See \cite{sniady} for details. 
However, even when the ESD converges, there is no guarantee that the limit is the Brown measure of the limit element.

\section{Main results}\label{sec:results}

The following assumption is basic to us.  

 \begin{assumption} \label{assu} The variables  
$\{a_{ii}: 1\le i\}\cup \{(a_{ij}, a_{ji}): 1\le i<j\}$ is a collection of independent random variables, and satisfy 
$\E[a_{ij}]=0$, $\E[a_{ij}^2]=1$ for all $i, j$, and have uniformly bounded moments of all orders. 
\begin{enumerate}
\item[{\bf (i)}] (Elliptic matrix) $\E[a_{ij}a_{ji}]=\rho$ for $1\le i\neq j\le n$.

\vspace{.2cm}
\item[{\bf (ii)}] (Generalised elliptic matrix) $\E[a_{ij}a_{ji}]=\rho_{|i-j|}$ for $1\le i\neq j\le n$.
\end{enumerate}
\end{assumption}
\noindent  Note that {\bf (i)} is a particular case of {\bf (ii)}. Any  rectangular matrix $X_{p\times n}$ is defined to be elliptic or generalised elliptic by adapting the above definition in the obvious way. 
In the literature an elliptic matrix is assumed to also satisfy the condition that $\{a_{ii}: 1\le i\}$ and $\{(a_{ij}, a_{ji}): 1\le i<j\}$ are collections of i.i.d. random variables. 

The first result is on the $*$-distribution limit for 
generalised elliptic matrices.

\begin{theorem}\label{thm:general}
Suppose $A_n$ is a sequence of generalised elliptic matrices whose entries satisfy Assumption \ref{assu}{\bf (ii)}.  Suppose further that $\{\rho_i\}$ satisfies: 
for all  $\e_1,\ldots, \e_{2k}\in \{1,*\}$, and for all $\pi\in NC_2(2k)$, 
\begin{align}\label{eqn:condition}
\lim_{n\to \infty}\frac{1}{n^{k+1}}\sum_{I_{2k}}\prod_{(r,s)\in \pi}\rho_{|i_r-i_s|}^{\d_{\e_r\e_s}}\d_{i_ri_{s+1}}\d_{i_{s}i_{r+1}}<\infty,\;\;.
\end{align}

Then $\bar A_n:=\frac{A_n}{\sqrt n}$ converges in $*$-distribution. 
Moreover, if $\rho_i\equiv \rho$ for all $i$, then (\ref{eqn:condition}) holds  
and the limit is elliptic with parameter $\rho$.
\end{theorem}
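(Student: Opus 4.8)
The plan is to establish convergence in $*$-distribution by identifying the limiting mixed moments through a combinatorial expansion, following the standard genus-expansion strategy adapted to the elliptic (non-symmetric, correlated-transpose) setting. First I would write, for $\bar A_n = A_n/\sqrt n$ with entries $a_{ij}$ satisfying Assumption \ref{assu}{\bf (ii)},
\[
\vp_n\big(\bar A_n^{\e_1}\cdots \bar A_n^{\e_{2k}}\big)
=\frac{1}{n^{k+1}}\sum_{I_{2k}} \E\big[a^{(\e_1)}_{i_1 i_2} a^{(\e_2)}_{i_2 i_3}\cdots a^{(\e_{2k})}_{i_{2k} i_1}\big],
\]
where $a^{(1)}_{ij}=a_{ij}$ and $a^{(*)}_{ij}=\overline{a_{ij}}$ (and for the real-entry case $a^{(*)}_{ij}=a_{ji}$, which is the version relevant here). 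Since odd moments of the centered entries vanish and only pairings survive in the leading order, apply Wick's formula (Fact \ref{ft:wick}) in the Gaussian case — or a moment-comparison/truncation argument using the uniformly bounded moments in the general case — to reduce the expectation to a sum over pair partitions $\pi\in\mathcal P_2(2k)$ of products $\prod_{(r,s)\in\pi}\E[a^{(\e_r)}_{i_r i_{r+1}} a^{(\e_s)}_{i_s i_{s+1}}]$, with index constraints forcing $\{i_r,i_{r+1}\}=\{i_s,i_{s+1}\}$ and the resulting correlation factor being either $1$ (same orientation) or $\rho_{|i_r-i_s|}$ (opposite orientation), encoded by the exponent $\d_{\e_r\e_s}$ together with the Kronecker deltas $\d_{i_r i_{s+1}}\d_{i_s i_{r+1}}$ as in \eqref{eqn:condition}.

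**Next** I would carry out the genus/cardinality count: for a fixed $\pi\in\mathcal P_2(2k)$, the number of free index choices is exactly $|\gamma\pi|$, so the contribution of $\pi$ is $O(n^{|\gamma\pi|-k-1})$ up to the bounded correlation weights. By Fact \ref{ft:cardianlity}, $|\gamma\pi|\le k+1$ with equality iff $\pi\in NC_2(2k)$; hence crossing partitions contribute $O(1/n)\to 0$, and only non-crossing pairings survive. For each surviving $\pi\in NC_2(2k)$ the prefactor is $n^{-(k+1)}$ times a sum over $k+1$ free indices of a bounded product of $\rho$'s and deltas — this is precisely the quantity in \eqref{eqn:condition}, whose limit exists by hypothesis. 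Summing over $NC_2(2k)$ gives $\lim_n \vp_n(\bar A_n^{\e_1}\cdots\bar A_n^{\e_{2k}})$, so convergence in $*$-distribution holds; odd moments vanish in the limit as well.

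**For the ``moreover'' clause** with $\rho_i\equiv\rho$: the correlation weight $\rho_{|i_r-i_s|}^{\d_{\e_r\e_s}}$ becomes simply $\rho^{\#\{\text{opposite-orientation pairs}\}}$, independent of the indices, so in \eqref{eqn:condition} the inner sum over $I_{2k}$ under the constraints $\d_{i_r i_{s+1}}\d_{i_s i_{r+1}}$ for $(r,s)\in\pi$ just counts the $n^{|\gamma\pi|}=n^{k+1}$ consistent labelings, and the limit is the finite number $\rho^{p(\pi,\vec\e)}$ where $p$ counts opposite-orientation blocks; so \eqref{eqn:condition} holds automatically. It then remains to recognize $\sum_{\pi\in NC_2(2k)}\rho^{p(\pi,\vec\e)}$ as the mixed $*$-moments of the elliptic element $e=\sqrt{\tfrac{1+\rho}{2}}\,s_1+i\sqrt{\tfrac{1-\rho}{2}}\,s_2$. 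I would do this by noting that for a circular-type element the moment combinatorics are governed by non-crossing pairings that respect the $\{1,*\}$ pattern, and the elliptic covariance structure $\vp(ee)=\vp(e^*e^*)=\rho$, $\vp(ee^*)=\vp(e^*e)=1$ reproduces exactly the weight $1$ or $\rho$ assigned to each block according to whether the two legs carry matching or opposite decorations — matching the deltas $\d_{i_r i_{s+1}}\d_{i_s i_{r+1}}$, which in the constant case precisely select the pairings where transpose-adjacency is consistent. Equivalently, one can invoke Fact \ref{ft:moment-cumulant}: the only non-vanishing free cumulants of $e$ are the second-order ones, $\kappa_2(e,e)=\kappa_2(e^*,e^*)=\rho$ and $\kappa_2(e,e^*)=\kappa_2(e^*,e)=1$, and expanding $\vp(e^{\e_1}\cdots e^{\e_{2k}})$ over $NC(2k)$ collapses to a sum over $NC_2(2k)$ with exactly these weights, which is term-by-term the limit obtained above.

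**The main obstacle** I anticipate is the bookkeeping that matches the orientation data in the Kronecker deltas $\d_{i_r i_{s+1}}\d_{i_s i_{r+1}}$ and the exponent $\d_{\e_r\e_s}$ to the elliptic covariance $1$-versus-$\rho$ dichotomy: one must check that a non-crossing pair $(r,s)$ with $r<s$ in the word $e^{\e_1}\cdots e^{\e_{2k}}$ contributes a nonzero index-consistent term only in the two cases (i) $\e_r\ne\e_s$, giving factor $1$, or (ii) $\e_r=\e_s$, giving factor $\rho$, and that the induced identification of indices across blocks is consistent globally — i.e. that the deltas never over-constrain a non-crossing $\pi$ below $k+1$ free indices. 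This is where Fact \ref{ft:cardianlity} is used in its equality case, and where one must be slightly careful about the convention relating $\gamma\pi$ to the index identifications; beyond that the argument is the standard genus expansion and should go through routinely. In the non-Gaussian case, the extra step is a truncation/moment-matching argument showing that contributions from partitions with a block of size $\ge 3$, or from higher joint cumulants of the entries, are $O(1/n)$ thanks to the uniformly bounded moments — again standard, so I would only sketch it.
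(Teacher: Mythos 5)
Your overall scheme — trace expansion, reduction to pair partitions, genus/cardinality count via Fact \ref{ft:cardianlity} to kill crossing pairings, and identification of the limit with the elliptic moments via free cumulants — is the same as the paper's. However, there is a genuine gap in the genus-count step, and it is precisely the point where the paper introduces a dedicated lemma.

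When you write ``for a fixed $\pi\in\mathcal P_2(2k)$, the number of free index choices is exactly $|\gamma\pi|$'' you are implicitly assuming that for every pair $(r,s)\in\pi$ the only index constraint entering the sum is the \emph{transpose-type} matching $\d_{i_r i_{s+1}}\d_{i_s i_{r+1}}$. That is not what the pair expectation gives. Since $\E[a_{ij}^2]=1$ regardless of $\rho$, the factor $\E\bigl[a^{\e_r}_{i_ri_{r+1}}a^{\e_s}_{i_si_{s+1}}\bigr]$ is a \emph{sum} of two delta patterns,
\[
a'(r,s)=\d_{i_ri_s}\d_{i_{r+1}i_{s+1}}\quad\text{and}\quad b'(r,s)=\d_{i_ri_{s+1}}\d_{i_si_{r+1}},
\]
with $\rho$-dependent scalar weights attached to each depending on $\d_{\e_r\e_s}$; for instance when $\e_r=\e_s$ the expectation is $a'(r,s)+\rho\, b'(r,s)$, so the same-orientation term $a'$ is present with coefficient $1$, not $0$. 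Expanding the product over $\pi$ therefore produces $2^k$ mixed $a'$--$b'$ products, and the index constraints imposed by a mixed product are \emph{not} those of $\gamma\pi$; the bound $|\gamma\pi|\le k+1$ from Fact \ref{ft:cardianlity} applies only to the all-$b'$ term. You must separately show that every product containing at least one $a'$ factor contributes $o(1)$ after the $n^{-(k+1)}$ normalization. This is exactly the content of Lemma \ref{lem:wigner} in the paper, which establishes it indirectly by comparing against the known Wigner-matrix limit: for the Wigner matrix $\E[Y_{i_ri_{r+1}}Y_{i_si_{s+1}}]=a'(r,s)+b'(r,s)$, the moment limit is $|NC_2(2k)|$, and that target is already exhausted by the all-$b'$ sum, forcing the mixed sums to vanish. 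Your ``main obstacle'' paragraph misdiagnoses the issue as a possible over-constraining of the $b'$-deltas on non-crossing $\pi$ — that never happens — whereas the real issue is the extra $a'$-matchings, which must be argued away and are not captured by Fact \ref{ft:cardianlity} alone. The rest of your outline (vanishing odd moments, reduction to pair partitions via bounded moments and the connected-graph cardinality bound, and the free-cumulant identification $\kappa_2(e,e)=\kappa_2(e^*,e^*)=\rho$, $\kappa_2(e,e^*)=\kappa_2(e^*,e)=1$ giving $\sum_{\pi\in NC_2(2k)}\rho^{T(\pi)}$) matches the paper and is fine.
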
 
Condition \eqref{eqn:condition} also holds if $\rho_{|i-j|}=f\l(\frac{|i-j|}{n}\r)$, where $0\le f \le 1$ is a bounded continuous function on $[0,1]$. We elaborate on this in Section \ref{sec:limit}. 
The next result gives  asymptotic freeness of independent elliptic matrices  with deterministic matrices.

\begin{theorem}\label{thm2}
Let $A_n^{(1)},A_n^{(2)},\ldots, A_n^{(m)}$ be $m$ independent elliptic random matrices (with possibly different correlations) whose entries 
satisfy Assumption \ref{assu}{\bf (i)}. Then
$$
\bar A_n^{(1)},\ldots,\bar A_n^{(m)}\stackrel{*\mbox{-dist}}{\longrightarrow} e_1,\ldots ,e_{m},
$$
where $e_1,\ldots ,e_{m}$ are free and elliptic. 

In addition, suppose that $A_n^{(1)},A_n^{(2)},\ldots, A_n^{(m)}$ are Gaussian, and $D_n^{(1)},D_n^{(2)}, \ldots, D_n^{(\ell)}$ are  constant matrices such that 
$$
D_n^{(1)},\ldots, D_n^{(\ell)}\stackrel{*\mbox{-dist}}{\longrightarrow} d_1,\ldots, d_{\ell},\;\;\mbox{as $n\to \infty$},
$$
for some $d_1,\ldots,d_{\ell}\in (\mathcal A, \varphi)$. Then 
$$
\bar A_n^{(1)},\ldots,\bar A_n^{(m)},D_n^{(1)},\ldots, D_n^{(\ell)}\stackrel{*\mbox{-dist}}{\longrightarrow} e_1,\ldots ,e_{m},d_1,\ldots, d_{\ell}
$$
where $e_1,\ldots ,e_{m}, \{d_1,\ldots, d_{\ell}\}$ are free.
\end{theorem}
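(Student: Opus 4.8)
The plan is to prove both assertions by the moment method, extending the genus-expansion computation behind Theorem \ref{thm:general} from one matrix to a coloured word. For the first assertion, fix $k$, signs $\epsilon_1,\dots,\epsilon_{2k}\in\{1,*\}$ and colours $t_1,\dots,t_{2k}\in\{1,\dots,m\}$, and expand $\varphi_n\big((\bar A_n^{(t_1)})^{\epsilon_1}\cdots(\bar A_n^{(t_{2k})})^{\epsilon_{2k}}\big)=n^{-(k+1)}\sum_{I_{2k}}\E\big[\prod_{r=1}^{2k}(a^{(t_r)})^{\epsilon_r}_{i_r i_{r+1}}\big]$, with the index convention $i_{2k+1}=i_1$. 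Exactly as in Theorem \ref{thm:general}, the uniformly bounded moments reduce the sum to contributions indexed by pair partitions $\pi\in\mathcal P_2(2k)$; since matrices of distinct colours are independent and centred, a block $(r,s)\in\pi$ contributes $0$ unless $t_r=t_s$, in which case it contributes $\rho_{t_r}^{\delta_{\epsilon_r\epsilon_s}}$ together with the index constraints $\delta_{i_r i_{s+1}}\delta_{i_s i_{r+1}}$ of \eqref{eqn:condition}. Counting free indices by $|\gamma\pi|$ and invoking Fact \ref{ft:cardianlity}, only colour-respecting $\pi\in NC_2(2k)$ survive, and the limiting moment is the sum of $\prod_{(r,s)\in\pi}\rho_{t_r}^{\delta_{\epsilon_r\epsilon_s}}$ over such $\pi$; odd words, and even words in which some colour has odd multiplicity, vanish on both sides.

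It then remains to recognise this limit, via Fact \ref{ft:moment-cumulant}, as $\varphi\big(e_{t_1}^{\epsilon_1}\cdots e_{t_{2k}}^{\epsilon_{2k}}\big)$ for free elliptic elements $e_1,\dots,e_m$ with parameters $\rho_1,\dots,\rho_m$: an elliptic element is a linear combination of a semicircular family, so its free cumulants of order $\neq2$ vanish, while $\kappa_2$ equals $\rho_j$ on $(e_j,e_j)$ and $(e_j^*,e_j^*)$ and $1$ on $(e_j,e_j^*)$ and $(e_j^*,e_j)$, and freeness forces every cumulant mixing distinct colours to vanish; the moment--cumulant expansion over $NC(2k)$ then collapses to exactly the colour-respecting non-crossing pair partitions above. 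This yields the joint convergence and the freeness of $e_1,\dots,e_m$.

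For the second assertion, Gaussianity lets me compute all mixed $*$-moments of $\bar A_n^{(1)},\dots,\bar A_n^{(m)},D_n^{(1)},\dots,D_n^{(\ell)}$ by Wick's formula (Fact \ref{ft:wick}) applied to the Gaussian letters: a word alternating between $\bar A$-letters (with signs, hence with some transposes) and blocks of deterministic letters becomes a sum over pair partitions $\pi$ of the $\bar A$-positions, each block weighted by $\rho_{t_r}$ or $1$ (vanishing across distinct colours) with its Kronecker constraints, while the intervening blocks of $D$-entries are traced around the diagram. The same power-counting in $n$ as for a single matrix---now with the $D$-blocks contributing bounded factors, which is all that the $*$-distribution convergence of the $D_n$ supplies---shows that only colour-respecting non-crossing pairings of the Gaussian letters survive, each contributing $\varphi$ of the nested word in the $d$'s. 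By Fact \ref{ft:moment-cumulant} this is precisely $\varphi$ of the corresponding word in $e_1,\dots,e_m,d_1,\dots,d_\ell$ when the $e_j$ are free elliptic as above and $\{e_1\},\dots,\{e_m\},\{d_1,\dots,d_\ell\}$ are free, the $e_j$ contributing only their order-$2$ cumulants and $\{d_1,\dots,d_\ell\}$ its own moments. (Equivalently, one may split each Gaussian $A_n^{(j)}$, off the diagonal, as $\sqrt{\tfrac{1+\rho_j}{2}}\,W_n^{(j)}+\sqrt{\tfrac{1-\rho_j}{2}}\,V_n^{(j)}$ with $W_n^{(j)}$ symmetric and $V_n^{(j)}$ antisymmetric independent Gaussian matrices and appeal to the classical asymptotic freeness of independent Wigner-type and deterministic matrices recalled in the Introduction, the elliptic limit $e_j$ then being the corresponding combination of two free semicircular limits.)

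I expect the main obstacle to be the genus expansion in the presence of the deterministic ``colour'' matrices: showing that non-planar pairings of the Gaussian letters contribute $o(1)$ to the normalised trace despite the interleaved $D$-factors, and keeping correct track of the transposes produced by the adjoints $\epsilon_r=*$ across the several independent families (and, relatedly, making precise exactly which boundedness of the $D_n$ the genus estimate needs). Everything else should be routine---the reduction to pair partitions in the non-Gaussian first part is already contained in the proof of Theorem \ref{thm:general}, the identifications with $\varphi$ of words in free elliptic and deterministic elements are bookkeeping with Fact \ref{ft:moment-cumulant}, and the decomposition of a Gaussian elliptic matrix into symmetric and antisymmetric parts is elementary linear algebra.
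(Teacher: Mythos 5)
Your first part follows the paper's route essentially verbatim: expand the normalized trace, reduce to pair partitions by the bounded-moments/connectedness argument, note that blocks of distinct colours vanish, kill the $a'$-type contributions via Lemma \ref{lem:wigner} and Fact \ref{ft:cardianlity}, and match the resulting $\sum_{\pi\in NC_2}\prod\rho_{t_r}^{\delta_{\epsilon_r\epsilon_s}}\delta_{t_rt_s}$ against the free-cumulant expansion of words in free elliptic elements. Nothing to add there.

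For the second part your primary plan is also the paper's (Wick expansion of the Gaussian letters with interleaved $D$-blocks), but the step you flag as ``the main obstacle'' is precisely the step the paper needs a separate lemma for, and your stated reason for why it should work is not correct. You say the $D$-blocks ``contribute bounded factors, which is all that the $*$-distribution convergence of the $D_n$ supplies.'' In fact $*$-distribution convergence gives no bound on entries or operator norms of $D_n$; what it gives is convergence of normalized traces $\frac{1}{n}\Tr$ of words. The actual mechanism in the paper is Lemma \ref{lem:forfreeness}: after Wick, each block is $u(r,s)+v(r,s)$, and the lemma shows by induction that every term containing at least one ``same-entry'' constraint $u'(r,s)=\delta_{i_ri_s}\delta_{j_rj_s}$ vanishes, because contracting the adjacent $D$-factors through that constraint collapses the expression to a normalized trace of a $D$-word times too low a power of $n$. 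Only the all-$v'$ term survives, and then Fact \ref{ft:cardianlity} restricts to non-crossing $\pi$, giving $\sum_{\pi\in NC_2}\rho^{T(\pi)}\vp_{\pi\gamma}[d^{(1)},\ldots,d^{(2k)}]$. Without this lemma your power-counting claim is unjustified. Your parenthetical alternative --- write each Gaussian elliptic $A_n^{(j)}$, off the diagonal, as $\sqrt{\tfrac{1+\rho_j}{2}}W_n^{(j)}+\sqrt{\tfrac{1-\rho_j}{2}}V_n^{(j)}$ with $W$ real symmetric and $V$ real antisymmetric Gaussian, note that $iV_n^{(j)}$ is a Hermitian Wigner-type matrix with pure-imaginary off-diagonal entries, and invoke asymptotic freeness of $2m$ independent Gaussian Wigner matrices with $(D_n^{(i)})$ --- is a genuinely different and workable route not taken by the paper, but as written it too is only a sketch: one would need to verify that the freeness result being cited covers Hermitian Gaussian matrices with pure-imaginary off-diagonal and vanishing diagonal (routine, but not free), and that the diagonal correction is indeed a lower-order perturbation.
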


\noindent Now we move to rectangular random matrices.

\begin{theorem}\label{thm:rectangle}
Suppose $X_{p\times n}$ is an elliptic rectangular random matrix whose entries satisfies Assumption \ref{assu}{\bf (ii)}.
If $\frac{p}{n} \to y>0$ as $p\to \infty$, then $\bar X_p=\frac{1}{n}X_{p\times n}X_{n\times p}'$, 
 converges to a free Poisson element of rate $y$. Its expected ESD converges to the 
corresponding Mar\v cenko-Pastur law with parameter $y$.
\end{theorem}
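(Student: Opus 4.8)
The plan is to prove the convergence by the method of moments. Since $\bar X_p=\frac1n X_{p\times n}X_{n\times p}'$ is self-adjoint, its limiting $*$-distribution is nothing but the limit of the moments $\varphi_p(\bar X_p^k)=\frac1p\E\Tr(\bar X_p^k)$, $k\ge 1$; and since the Mar\v cenko-Pastur law $\mu_y$ is compactly supported, hence determined by its moments, showing that $\frac1p\E\Tr(\bar X_p^k)$ converges to the $k$-th moment $m_k(y)$ of $\mu_y$ for every $k$ proves both assertions at once: $\bar X_p$ converges in $*$-distribution to (a realization of) the free Poisson element of rate $y$, and the expected ESD $\E[F^{\bar X_p}]$ converges weakly to $\mu_y$. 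So the whole proof reduces to the moment identity $\lim_{n\to\infty}\frac1p\E\Tr(\bar X_p^k)=m_k(y)$.

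To obtain it, expand
$$
\E\Tr(\bar X_p^k)=\frac1{n^k}\sum_{\substack{i_1,\ldots,i_k\in[p]\\ j_1,\ldots,j_k\in[n]}}\E\bigl[X_{i_1j_1}X_{i_2j_1}X_{i_2j_2}X_{i_3j_2}\cdots X_{i_kj_k}X_{i_1j_k}\bigr],
$$
(with every second factor conjugated in the complex case), the row indices read cyclically. The only dependencies among the entries are the prescribed correlations within the pairs $(X_{ij},X_{ji})$, which occur only when $i,j\le\min(p,n)$; all other distinct entries are independent, so the expectation factorizes over the connected components of the graph on the $2k$ slots in which two slots are joined when their entries are equal or form such a correlated pair. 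Using the uniform bound on all moments, a by-now standard argument (as in the moment proofs of the Wigner and Mar\v cenko-Pastur theorems; cf. \cite{baisilversteinbook}) shows that components of size one force the term to vanish (mean zero), while components of size three or more and crossing configurations contribute $o(n^{k+1})$; since $pn^k\sim y\,n^{k+1}$, only configurations that split into $k$ size-two components in a non-crossing fashion survive in the limit, i.e. the limit is a sum over pair partitions $\pi\in\mathcal P_2(2k)$ with only the non-crossing $\pi$ contributing.

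The key point is that the correlation parameters $\rho_m$ do not enter the limit. Reading the $2k$ slots in cyclic order, consecutive slots share one index: $X_{i_aj_a}$ shares the column index $j_a$ with $X_{i_{a+1}j_a}$, which shares the row index $i_{a+1}$ with $X_{i_{a+1}j_{a+1}}$; thus the slots alternate between ``row-type'' and ``column-type'', the bipartite structure that produces the Mar\v cenko-Pastur law. A non-crossing pairing necessarily joins a row-type slot to a column-type slot, and for such a pair $\E[X_{i_aj_a}X_{i_{a'+1}j_{a'}}]$ the diagonal contribution $\delta_{i_ai_{a'+1}}\delta_{j_aj_{a'}}$ equates a $[p]$-index with a $[p]$-index and an $[n]$-index with an $[n]$-index, while the correlation contribution $\rho_{|i_a-j_a|}\delta_{i_aj_{a'}}\delta_{j_ai_{a'+1}}$ (present only inside $[\min(p,n)]$) equates a $[p]$-index with an $[n]$-index. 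As in the proof of Theorem \ref{thm:general}, any such forced collision between the two distinct index ranges strictly lowers the number of free summation indices, so the correlation terms contribute $o(n^{k+1})$ and disappear. Hence $\lim\frac1p\E\Tr(\bar X_p^k)$ is exactly what one gets by treating all entries of $X_{p\times n}$ as independent.

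It remains to run the classical Mar\v cenko-Pastur moment count: the surviving $\pi$ are the non-crossing pair partitions of the $2k$-cycle (which automatically respect the bipartition), each such $\pi$ yielding, after the index identifications, a factor $p^{a(\pi)}n^{k+1-a(\pi)}$ for an integer $a(\pi)\ge1$, so in the limit it contributes $y^{a(\pi)-1}$, and summing over $\pi$ gives $m_k(y)=\sum_{r=0}^{k-1}\frac1{r+1}\binom kr\binom{k-1}r y^r$, the $k$-th moment of $\mu_y$ and of the free Poisson element of rate $y$ (whose free cumulants are $\kappa_m=y^{m-1}$, $m\ge1$); alternatively one may simply quote the classical theorem here. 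Together with the first paragraph this finishes the proof. I expect the main obstacle to be the combinatorial bookkeeping in the two middle steps — in particular, establishing cleanly (using the uniform moment bound and a free-index count analogous to Fact \ref{ft:cardianlity}) that only non-crossing pair partitions survive and that the transpose/correlation terms are genuinely of strictly smaller order because they force a collision between the row-index range $[p]$ and the column-index range $[n]$; once this is done, the rest is the standard Mar\v cenko-Pastur computation.
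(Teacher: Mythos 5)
Your plan is essentially the paper's: a moment computation via Wick, reduce to non-crossing pair partitions, and show the correlation terms contribute negligibly. The cosmetic difference is the last step: the paper shows $\lim\frac1p\E\Tr(\bar X_p^k)$ equals the corresponding limit for the Gaussian Wishart matrix $\bar Y_p$ and then imports the Mar\v cenko--Pastur limit from Result~\ref{res:rectg}, whereas you compute the limiting moments directly and match them against the known formula $m_k(y)=\sum_r\frac{1}{r+1}\binom{k}{r}\binom{k-1}{r}y^r$. Both routes are fine, and both presuppose the same combinatorial core.

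Where I would push back is on the justification that the $\rho$-terms vanish. You argue that for a non-crossing pairing of a row-type slot with a column-type slot, the transpose contribution $\rho_{|i_a-j_a|}\delta_{i_aj_{a'}}\delta_{j_ai_{a'+1}}$ forces a $[p]$--$[n]$ collision, and assert that this ``strictly lowers the number of free summation indices.'' That conclusion is correct, but the stated reason is not the actual mechanism and isn't self-evident: replacing the diagonal identifications $\delta_{i_ai_{a'+1}}\delta_{j_aj_{a'}}$ by the transpose identifications does not just add a cross-range merge, it simultaneously removes two edges of the identification graph, and the net effect on the number of connected components is not obvious from the $[p]$/$[n]$ bookkeeping alone. (Note also that the same vanishing occurs when $p=n$, where your range argument is vacuous.) What actually closes this gap in the paper is Lemma~\ref{lem:wigner}, which is a purely combinatorial/asymptotic statement: any product that contains at least one same-index ($a'$-type) delta, summed over indices, contributes one order of magnitude less than the all-transpose ($b'$-type) product; combined with Fact~\ref{ft:cardianlity}, only $\pi\in NC_2(2k)$ with the all-$b'$ matching survives, and for such $\pi$ the alternation of $\epsilon$'s kills the residual $\rho$ factor. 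You are aware this is the ``main obstacle''; the clean fix is simply to invoke Lemma~\ref{lem:wigner} (after observing $g(r,s)\le a'(r,s)$, $f(r,s)\le b'(r,s)$, as the paper does), rather than the $[p]$--$[n]$ heuristic.
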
 

\noindent Now we claim that independent matrices of the form $\bar X_p$ are asymptotically free. 

\begin{theorem}\label{thm:assymrect}
Let $\bar X_p^{(1)}, \ldots, \bar X_p^{(m)}$ be independent and as described in Theorem \ref{thm:rectangle} with possibly different correlation parameters. If $\frac{p}{n}\to y>0$ as $p\to \infty$, then $\bar X_p^{(1)},\ldots, \bar X_p^{(m)}$ are asymptotically free. 
\end{theorem}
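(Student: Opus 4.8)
The plan is to combine the moment method used for Theorem \ref{thm:general} with a graph-theoretic (genus-type) expansion of mixed traces, in the spirit of the proof of Theorem \ref{thm2}. Write $\bar X_p^{(t)} = \frac1n X^{(t)}_{p\times n}(X^{(t)}_{n\times p})'$ for $t=1,\dots,m$, and note each $\bar X_p^{(t)}$ is already self-adjoint and nonnegative, so a generic $*$-monomial in the $\bar X_p^{(t)}$ is just a word $\bar X_p^{(t_1)}\bar X_p^{(t_2)}\cdots \bar X_p^{(t_q)}$ with $t_j\in\{1,\dots,m\}$. The goal is to show
$$
\lim_{p\to\infty}\varphi_p\bigl(\bar X_p^{(t_1)}\cdots \bar X_p^{(t_q)}\bigr)
$$
equals the value predicted by freeness of $q$ (well, $m$) free Poisson elements with the respective rates, equivalently that the centered version $\mathring{\bar X}_p^{(t)} = \bar X_p^{(t)} - \varphi_p(\bar X_p^{(t)})\mathbf 1$ satisfies the vanishing-of-alternating-centered-words condition in the limit. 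By Theorem \ref{thm:rectangle} we already know the marginal limit of each family is a free Poisson of rate $y$; what remains is the \emph{joint} statement, i.e.\ asymptotic freeness across the index $t$.

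First I would expand $\varphi_p$ of a word in the $\bar X_p^{(t)}$ by writing out each $\bar X_p^{(t)}$ entrywise: a length-$q$ word produces a sum over $2q$ row/column indices, with the row indices (into $\{1,\dots,p\}$) and column indices (into $\{1,\dots,n\}$) alternating around a cycle, and the summand being an expectation of a product of $2q$ entries drawn from the $X^{(t_j)}$. Because the matrices are independent across $t$, this expectation factorizes over the distinct values of $t$ appearing; within each block of equal $t$, apply the moment bound / Gaussian--Wick structure exactly as in the proof of Theorem \ref{thm:rectangle}. Each such expectation is, to leading order, governed by a pair partition $\pi$ of the positions, and only pairs $(r,s)$ with $t_r=t_s$ contribute. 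Encode the data as a two-colored graph on the cycle of indices (one color for the "$p$-side" vertices, one for the "$n$-side" vertices, edges given by $\pi$), and count the number of free index labels: a standard genus inequality (the rectangular analogue of Fact \ref{ft:cardianlity}, as already used for the single-family case) shows the sum is $O(1)$, with the leading $O(1)$ term supported exactly on non-crossing $\pi$ that are moreover "$t$-respecting" (every pair joins equal-$t$ positions).

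Next I would identify the surviving leading-order contributions. For a non-crossing pair partition of the $q$ blocks in which every pair connects two positions with the \emph{same} color $t$, the non-crossing constraint forces the pairing to be "nested" compatibly with the block structure, and the contribution is a product, over the pairs, of the $p/n$-ratio factors (converging to powers of $y$) — which is precisely the moment formula for a product / free convolution of free Poisson elements, and in particular matches $\varphi(e'_{t_1}\cdots e'_{t_q})$ when the $e'_t$ are free, each free Poisson$(y)$. The correlation parameters $\rho^{(t)}$ enter each single-$t$ block exactly as in Theorem \ref{thm:rectangle} and, as there, wash out of the limit (the MP law does not depend on $\rho$); so the cross-$t$ combinatorics is clean. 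Equivalently, one checks directly that mixed cumulants of the centered $\mathring{\bar X}_p^{(t)}$ across different $t$ tend to $0$ and applies the freeness criterion.

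The main obstacle I anticipate is the bookkeeping of the rectangular two-color genus expansion once several independent families are interleaved: one must be careful that, after the expectation factorizes over colors, the "defect from non-crossing" is still additive over color-blocks so that the global estimate $|\gamma\pi|\le q+1$ degrades strictly whenever $\pi$ either is crossing or pairs two different colors — i.e.\ that no cancellation or conspiracy between blocks of different colors can restore the top order. This is exactly the place where independence across $t$ is used crucially, and where the argument is morally identical to the deterministic-matrix case of Theorem \ref{thm2} (there the $D_n^{(i)}$ play the role the "other color" families play here): the clean separation of the cycle into monochromatic arcs forces any non-$t$-respecting pairing to lose at least one free index. Modulo that estimate, the rest is the routine translation "leading non-crossing $t$-respecting pairings $\leftrightarrow$ free Poisson moment formula", which I would state and then refer back to the computations already carried out for Theorems \ref{thm:rectangle} and \ref{thm2}.
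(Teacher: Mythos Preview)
Your approach is correct but differs from the paper's in its endgame. Both proofs begin the same way: expand the trace of a mixed word $\bar X_p^{(\tau_1)}\cdots \bar X_p^{(\tau_k)}$ into a sum over $2k$ alternating row/column indices, reduce (via the mean-zero and moment-bound hypotheses, exactly as in the proof of Theorem \ref{thm:rectangle}) to a sum over pair partitions $\pi\in\mathcal P_2(2k)$ with the constraint $\tau_r'=\tau_s'$ on each pair, apply Lemma \ref{lem:wigner} to kill the $a'(r,s)$-type terms, and use Fact \ref{ft:cardianlity} to restrict to $\pi\in NC_2(2k)$, on which the $\rho$-dependent factor $\delta_{\epsilon_r\epsilon_s}$ vanishes automatically (one of $r,s$ is odd and the other even). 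Where you then propose to \emph{identify} the surviving sum $\sum_{\pi\in NC_2(2k)}\prod_{(r,s)\in\pi}\delta_{\tau_r'\tau_s'}\cdot(\text{power of }y)$ directly with the mixed moments of $m$ free Poisson elements, the paper instead runs the identical expansion for $\bar Y_p^{(\tau_1)}\cdots\bar Y_p^{(\tau_k)}$ with i.i.d.\ $N(0,1)$ entries, observes that the two limits are equal term by term, and then invokes the known fact (from \cite{capitaine}) that the $\bar Y_p^{(t)}$ are asymptotically free. The paper's comparison trick is shorter because it outsources the ``surviving combinatorics $=$ free moments'' verification; your direct route is more self-contained but requires that extra identification step.

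One small correction to your obstacle paragraph: you worry about showing that ``non-$t$-respecting'' pairings lose an order, but by independence of the $X^{(t)}$ across $t$ the covariance $\E[X_{i_ri_{r+1}}^{(\tau_r')\epsilon_r}X_{i_si_{s+1}}^{(\tau_s')\epsilon_s}]$ vanishes \emph{exactly} when $\tau_r'\neq\tau_s'$, so no order-counting is needed there. The only genuine genus estimate is crossing vs.\ non-crossing, and for that Fact \ref{ft:cardianlity} applies to the full cycle $\gamma$ on $\{1,\dots,2k\}$ regardless of the coloring; there is no need to decompose the estimate color-by-color.
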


\noindent We now give the Brown measure of product of free elliptic elements.

\begin{theorem}\label{thm:brown}
Let $k\ge 2$ and $e_1,\ldots,e_k$ be $k$ free elliptic elements in $(\mathcal A, \vp)$ with possibly different parameters. Then the Brown measure $\mu_k$ of $e_1\cdots e_k$ is rotationally invariant and can be described by the probabilities  
\begin{align*}
\mu_k(\{z\;:\; |z|\le t\})=\left\{\begin{array}{ll}
t^{\frac{2}{k}} & \mbox{for $t\le 1$}\\
1 & \mbox{for $t>1$}.
\end{array}\r.
\end{align*} 
\end{theorem}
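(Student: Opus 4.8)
The plan is to compute the Brown measure via the standard Haagerup--Larsen type recipe: first reduce to understanding the $*$-distribution of $a_\lambda := (e_1\cdots e_k - \lambda)(e_1\cdots e_k-\lambda)^*$, and then recover $\mu_k$ from $\log\Delta(e_1\cdots e_k-\lambda)=\tfrac12\vp(\log a_\lambda)$ by applying the Laplacian in $\lambda$. The key structural input is that each $e_j$ is $R$-diagonal: an elliptic element $e_j=\sqrt{(1+\rho_j)/2}\,s_1+i\sqrt{(1-\rho_j)/2}\,s_2$ has the same $*$-distribution as $u_j h_j$ where $u_j$ is Haar unitary and $h_j\ge 0$ is $*$-free from $u_j$, because an elliptic element is $R$-diagonal (its $*$-distribution is invariant under multiplication by a free Haar unitary). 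Since the $e_j$ are free, the product $e_1\cdots e_k$ is again $R$-diagonal, and by the Haagerup--Larsen theorem its Brown measure is rotationally invariant and determined by the distribution of $|e_1\cdots e_k|$, equivalently by the $S$-transform of $(e_1\cdots e_k)(e_1\cdots e_k)^*$.

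Concretely, I would proceed as follows. First, record that for a single elliptic element $e_j$ with parameter $\rho_j$, $e_je_j^*$ has the same distribution as $c c^*$ for a circular $c$ — indeed $\vp((ee^*)^n)$ are the Catalan numbers $C_n$, since the free cumulants $\kappa_{2n}$ of $e$ vanish except... more directly, $ee^*$ is a free-Poisson (Marchenko--Pastur with parameter $1$) element regardless of $\rho_j$; this is the computation already used implicitly in Theorem \ref{thm:rectangle}. Hence the $S$-transform of $e_je_j^*$ is $S_{e_je_j^*}(z)=\frac{1}{1+z}$. Next, using the $R$-diagonal multiplication rule (Haagerup--Larsen / Nica--Speicher): if $a,b$ are free and $R$-diagonal then $ab$ is $R$-diagonal and $S_{(ab)(ab)^*}=S_{aa^*}\cdot S_{bb^*}$. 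Iterating over $j=1,\dots,k$ gives
\begin{align*}
S_{(e_1\cdots e_k)(e_1\cdots e_k)^*}(z)=\prod_{j=1}^{k}\frac{1}{1+z}=\frac{1}{(1+z)^k}.
\end{align*}
From this $S$-transform one inverts to get the distribution $\nu$ of $|e_1\cdots e_k|^2$, and then Haagerup--Larsen says $\mu_k$ is the rotationally invariant measure whose radial distribution function is $F_{\mu_k}(t)=\mu_k(\{|z|\le t\})$ with the inverse given in terms of the $S$-transform. Precisely, writing $p = F_{\mu_k}(t)$, the Haagerup--Larsen formula yields $t = \big(S_{(e_1\cdots e_k)(e_1\cdots e_k)^*}(p-1)\big)^{-1/2}$ on the support; plugging in $S(p-1)=p^{-k}$ gives $t=p^{k/2}$, i.e. $p=t^{2/k}$ for $t\in[0,1]$, which is exactly the claimed formula, with the support being the unit disc (as $p$ ranges over $[0,1]$).

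To keep the argument self-contained given only what is in the excerpt, I would instead give the direct Fuglede--Kadison computation: show $\vp\big(\log(a_\lambda+\e^2)\big)$ can be evaluated through the Brown measure of $e_1\cdots e_k$ being rotationally symmetric, reducing the 2D Laplacian to the radial operator $\tfrac14\big(\partial_t^2+\tfrac1t\partial_t\big)$ acting on $r\mapsto \int \log|r e^{i\theta}-\cdot|$; equivalently compute $\tfrac{d}{dt}\big[t\,\tfrac{d}{dt}\log\Delta(e_1\cdots e_k - t)\big]$ and identify it with $\tfrac{k}{2}t^{2/k-1}\cdot\tfrac{1}{2}$, which integrates to the stated $F_{\mu_k}$. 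The rotational invariance itself follows because multiplying each $e_j$ by a scalar $e^{i\theta/k}$ does not change its $*$-distribution (elliptic elements, being of the form $u h$, are invariant under $e^{i\theta}e$), hence $e^{i\theta}(e_1\cdots e_k)$ has the same $*$-distribution as $e_1\cdots e_k$, so $\mu_k$ is $e^{i\theta}$-invariant.

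The main obstacle I anticipate is justifying the multiplicativity of the relevant transform across the $k$-fold free product in a way that is airtight for possibly non-invertible, non-tracial-issue-free elements — i.e., handling the regularization $\e\to 0$ in the Fuglede--Kadison determinant and making sure the $S$-transform manipulation (which a priori needs $\vp(a)\neq 0$, satisfied here since $\vp(e_je_j^*)=1$) is valid, together with verifying that the product of $R$-diagonal elements is $R$-diagonal when one only knows freeness. The cleanest route is: (a) prove each $e_j$ is $R$-diagonal by exhibiting the polar-type decomposition $e_j \eqd u_j h_j$ with $u_j$ Haar unitary $*$-free from $h_j$ (using that an elliptic element's $*$-distribution is invariant under left multiplication by a free Haar unitary, checkable on mixed moments via the semicircular representation), (b) quote that freeness of $R$-diagonal elements makes their product $R$-diagonal with multiplicative $S$-transform of the "square", (c) compute $S_{e_je_j^*}\equiv (1+z)^{-1}$, (d) apply Haagerup--Larsen. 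I would present (a)--(d) with the $\e$-regularization bookkeeping deferred to a short lemma, since that is the only genuinely technical point.
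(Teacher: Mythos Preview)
Your overall strategy --- show $R$-diagonality, compute the $S$-transform of $|e_1\cdots e_k|^2$, invoke Haagerup--Larsen --- matches the paper's. But step (a) of your plan is false: an elliptic element with parameter $\rho\neq 0$ is \emph{not} $R$-diagonal. Indeed $\kappa_2(e_j,e_j)=\rho_j$, whereas $R$-diagonality forces all non-alternating $*$-cumulants to vanish. Equivalently, there is no polar-type decomposition $e_j\eqd u_j h_j$ with $u_j$ Haar unitary $*$-free from $h_j$ unless $\rho_j=0$. The same error undermines your rotational-invariance argument: $e^{i\theta}e_j$ has $\kappa_2(e^{i\theta}e_j,e^{i\theta}e_j)=e^{2i\theta}\rho_j\neq\rho_j$, so its $*$-distribution is \emph{not} that of $e_j$. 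Because (a) fails, the multiplicativity $S_{(ab)(ab)^*}=S_{aa^*}S_{bb^*}$ you use in (b) is not justified, since that identity requires at least one factor to be $R$-diagonal.

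The paper repairs exactly this point, and the repair explains why the hypothesis $k\ge 2$ is essential. First, it cites the fact that the \emph{product} $e_1\cdots e_k$ of $k\ge 2$ free elliptic elements is $R$-diagonal (even though no single factor is); this is where rotational invariance comes from. Second, rather than multiplying $S$-transforms of the individual $e_je_j^*$, it proves directly (Lemma~\ref{lem:distribution}) that $e_1\cdots e_k e_k^*\cdots e_1^*$ and $c_1\cdots c_k c_k^*\cdots c_1^*$ have the same distribution, via the parity observation that in any $\pi\in NC_2(2nk)$ on the word $(e_1\cdots e_k e_k^*\cdots e_1^*)^n$ every block pairs some $e_\tau$ with an $e_\tau^*$, so only the cumulants $\kappa_2(e,e^*)=\kappa_2(e^*,e)=1$ enter and the $\rho_j$'s disappear. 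Your correct observation that $e_je_j^*\eqd cc^*$ is the $n=1$, single-factor shadow of this lemma, but it does not by itself give the $S$-transform of the product. Once Lemma~\ref{lem:distribution} is in place, one may legitimately use multiplicativity for the \emph{circular} factors (which \emph{are} $R$-diagonal) to get $S_k(z)=(1+z)^{-k}$ and finish with Haagerup--Larsen exactly as you outline.
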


Note that this Brown measure is also the LSD of $E_1\cdots E_k$, where $E_1,\ldots, E_k$ are independent elliptic matrices as defined in \cite{soshnikov}. The result does not hold  for $k=1$, as the Brown measure of elliptic element is uniform distribution on ellipse.
So the condition $k\ge 2$ is crucial.

\section{Proof of Theorem \ref{thm:general}}\label{sec:limit}
We first make the following remark to clarify condition \eqref{eqn:condition}.
For any $\pi\in NC_2(2k)$ and $\epsilon_i \in \{1, *\}$, $1\leq i \leq 2k$, define 
$$T(\pi)=|\{(r,s)\in \pi\;:\; \d_{\epsilon_r\epsilon_s}=1\}|.
$$

\begin{remark}\label{remark}
We show that how in two cases, the limit condition (\ref{eqn:condition}) in the statement of Theorem \ref{thm:general} can be verified.
\begin{enumerate}
 \item[{\bf (I)}] First consider the case where $\rho_i=\rho$ for some $|\rho|\leq 1$ and $\pi\in NC_2(2k)$. Then \eqref{eqn:condition} gives 
\begin{align*}
\lim\frac{1}{n^{k+1}}\sum_{I_{2k}}\prod_{(r,s)\in \pi}\rho_{|i_r-i_{s}|}^{\d_{\e_r\e_{s}}}\d_{i_ri_{s+1}}\d_{i_si_{r+1}}
=&
\lim\frac{1}{n^{k+1}}\sum_{I_{2k}}\prod_{(r,s)\in \pi}\rho^{\d_{\e_r\e_{s}}}\d_{i_ri_{s+1}}\d_{i_si_{r+1}}
\\=&
\lim\frac{1}{n^{k+1}}\sum_{I_{2k}}\prod_{(r,s)\in \pi}\rho^{\d_{\e_r\e_{s}}}\d_{i_ri_{\gamma\pi(r)}}\d_{i_si_{\gamma\pi(s)}}
\\=&\prod_{(r,s)\in \pi}\rho^{\d_{\e_r\e_{s}}}
\lim\frac{1}{n^{k+1}}\sum_{I_{2k}}\prod_{r=1}^{2k}\d_{i_ri_{\gamma\pi(r)}}=\rho^{T(\pi)}.
\end{align*}
The last equality follows from Fact \ref{ft:cardianlity}. Hence the limit condition (\ref{eqn:condition}) holds.

\item[{\bf (II)}] If $\rho_i^{(n)}=f(\frac{i}{n})$, for some continuous function $|f|\le 1$. Then the limit in Condition \eqref{eqn:condition} can be written explicitly in terms of function of $U_1,\ldots, U_k$, depending on $\pi$,
where $U_1,\ldots,U_k$ are i.i.d. uniform random variables in $[0,1]$. We explain it by examples. Note that 
$$
\prod_{(r,s)\in \pi}\d_{i_ri_{s+1}}\d_{i_si_{r+1}}=\prod_{(r,s)\in \pi}\d_{i_ri_{\gamma\pi(r)}}\d_{i_si_{\gamma\pi(s)}}=\prod_{r=1}^{2k}\d_{i_ri_{\gamma\pi(r)}}.
$$ 
The product will be non-zero when all the variables with index from same block of $\gamma\pi $ are equal. Let $\pi=(16)(25)(34)\in NC_2(6)$. Then $\gamma\pi=(1)(26)(35)(4)$, and  $\prod_{(r,s)\in \pi}\d_{i_ri_{s+1}}\d_{i_si_{r+1}}=1$ if $i_2=i_6$ and $i_3=i_5$. Thus, for  $\pi=(16)(25)(34)$, we have
\begin{align*}
&\lim_{n\to \infty}\frac{1}{n^4}\sum_{I_{6}}\prod_{(r,s)\in \pi}\rho_{|i_r-i_{s}|}^{\d_{\e_r\e_{s}}}\d_{i_ri_{s+1}}\d_{i_si_{r+1}}
\\=&\lim_{n\to \infty}\frac{1}{n^4}\sum_{I_{4}}f\l(\l|\frac{i_1}{n}-\frac{i_2}{n}\r|\r)^{\d_{\e_1\e_6}}f\l(\l|\frac{i_2}{n}-\frac{i_3}{n}\r|\r)^{\d_{\e_2\e_5}}f\l(\l|\frac{i_3}{n}-\frac{i_4}{n}\r|\r)^{\d_{\e_3\e_4}}
\\=&\E[f(|U_1-U_2|)^{\d_{\e_1\e_6}}f(|U_2-U_3|)^{\d_{\e_2\e_5}}f(|U_3-U_4|)^{\d_{\e_3\e_4}}],
\end{align*}
as $f$ is a bounded continuous function.  Similarly, it can be shown that, for $\pi=(12)(34)(56)$, $\gamma\pi=(135)(2)(4)(6)$ and $\e_1,\ldots, \e_6\in \{1,*\}$,
\begin{align*}
&\lim_{n\to \infty}\frac{1}{n^4}\sum_{I_{6}}\prod_{(r,s)\in \pi}\rho_{|i_r-i_{s}|}^{\d_{\e_r\e_{s}}}\d_{i_ri_{s+1}}\d_{i_si_{r+1}}
\\=&\E[f(|U_1-U_2|)^{\d_{\e_1\e_2}}f(|U_1-U_3|)^{\d_{\e_3\e_4}}f(|U_1-U_4|)^{\d_{\e_5\e_6}}].
\end{align*}
\end{enumerate}
Similarly the limit in condition \eqref{eqn:condition} can be calculated explicitly in all other cases.
\end{remark}
\noindent Before proving Theorem \ref{thm:general} we state the following lemmas, which will be used in the proof of Theorem \ref{thm:general}. We give the proofs of the lemmas at the end of this section. The first lemma gives the moments of an elliptic element.
\begin{lemma}\label{lem:moments of e}
Let $e$ be an elliptic element with parameter $\rho$ in an NCP $(\mathcal A, \vp)$. Then, for $\epsilon_1, \ldots, \e_p\in \{1,*\}$, 
\begin{align*}
\vp(e^{\epsilon_1}e^{\epsilon_2}\cdots e^{\epsilon_p})=\l\{\begin{array}{ll}
\sum\limits_{\pi\in NC_2(2k)}\rho^{T(\pi)} & \mbox{if $p=2k$}
\vspace{.15cm}\\0 & \mbox{if $p=2k+1$}.
\end{array}\r.
\end{align*}
\end{lemma}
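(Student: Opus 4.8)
The plan is to compute $\varphi(e^{\epsilon_1}\cdots e^{\epsilon_p})$ directly from the definition $e=\sqrt{\tfrac{1+\rho}{2}}\,s_1+i\sqrt{\tfrac{1-\rho}{2}}\,s_2$ with $s_1,s_2$ free standard semicircular elements. First I would recall the joint moment formula for a semicircular family: if $x_1,\ldots,x_p$ is a semicircular family (here each $x_j$ is one of the two free semicirculars), then
\[
\varphi(x_1\cdots x_p)=\sum_{\pi\in NC_2(p)}\prod_{(r,s)\in\pi}\varphi(x_rx_s),
\]
which is zero unless $p=2k$ is even. Since $s_1,s_2$ are free and each standard semicircular, the covariance is $\varphi(s_a s_b)=\delta_{ab}$. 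Expanding each factor $e^{\epsilon_j}$ — noting $e^*=\sqrt{\tfrac{1+\rho}{2}}\,s_1-i\sqrt{\tfrac{1-\rho}{2}}\,s_2$, so $e^{\epsilon_j}=\sqrt{\tfrac{1+\rho}{2}}\,s_1+(-1)^{[\epsilon_j=*]}i\sqrt{\tfrac{1-\rho}{2}}\,s_2$ — and multiplying out, $\varphi(e^{\epsilon_1}\cdots e^{\epsilon_p})$ becomes a sum over $NC_2(2k)$ of products of the two-point functions $\varphi$ applied to pairs of summands.

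Next I would evaluate a single pair contribution. For a block $(r,s)\in\pi$ the factor is $\varphi$ of the product of the ``$e^{\epsilon_r}$-summand'' and the ``$e^{\epsilon_s}$-summand'', summed over the choice of $s_1$ or $s_2$ in each. Because $\varphi(s_1 s_2)=\varphi(s_2 s_1)=0$, only the $s_1 s_1$ and $s_2 s_2$ terms survive, giving
\[
\tfrac{1+\rho}{2}\cdot 1 + (-1)^{[\epsilon_r=*]}(-1)^{[\epsilon_s=*]}\Bigl(i\sqrt{\tfrac{1-\rho}{2}}\Bigr)^2
= \tfrac{1+\rho}{2} - (-1)^{[\epsilon_r=*]+[\epsilon_s=*]}\,\tfrac{1-\rho}{2}.
\]
When $\epsilon_r=\epsilon_s$ (so $\delta_{\epsilon_r\epsilon_s}=1$) the sign is $+1$ and this equals $\tfrac{1+\rho}{2}-\tfrac{1-\rho}{2}=\rho$; when $\epsilon_r\neq\epsilon_s$ the sign is $-1$ and we get $\tfrac{1+\rho}{2}+\tfrac{1-\rho}{2}=1$. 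Thus each block contributes exactly $\rho^{\delta_{\epsilon_r\epsilon_s}}$, and multiplying over all $k$ blocks of $\pi$ gives $\prod_{(r,s)\in\pi}\rho^{\delta_{\epsilon_r\epsilon_s}}=\rho^{T(\pi)}$ by the definition of $T(\pi)$. Summing over $\pi\in NC_2(2k)$ yields the claimed formula, and the odd case is immediate from $NC_2(2k+1)=\emptyset$.

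There is essentially no serious obstacle here; the only point requiring a little care is justifying the semicircular-family moment formula in mixed form — i.e.\ that $\varphi$ of a product of elements drawn from the free semicircular pair $\{s_1,s_2\}$ is given by the sum over noncrossing pair partitions with the covariance $\delta_{ab}$ in the appropriate slots. This is standard (it is the free analogue of Wick's formula and follows from Fact \ref{ft:moment-cumulant} since the only nonvanishing free cumulants of a semicircular family are the second ones), so I would cite \cite{speicherbook} for it and then the computation above is a short bookkeeping exercise. Alternatively one can bypass the expansion by observing that the free cumulants of $e$ itself satisfy $\kappa_2(e^{\epsilon_r},e^{\epsilon_s})=\rho^{\delta_{\epsilon_r\epsilon_s}}$ and all higher cumulants vanish (since $e$ is a linear combination of semicirculars), and then Fact \ref{ft:moment-cumulant} gives the result directly with $NC(2k)$ collapsing to $NC_2(2k)$.
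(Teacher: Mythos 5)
Your proposal is correct and, in substance, the same as the paper's proof. The paper does exactly what you describe in your final paragraph as the ``alternative'': it computes the four second-order free cumulants $\kappa_2(e,e)=\kappa_2(e^*,e^*)=\rho$ and $\kappa_2(e,e^*)=\kappa_2(e^*,e)=1$ by bilinearity from $\kappa_2(s_a,s_b)=\delta_{ab}$, notes that all other free cumulants of $\{e,e^*\}$ vanish because $e$ is a linear combination of free semicirculars, and then applies Fact~\ref{ft:moment-cumulant}, which collapses the sum from $NC(p)$ to $NC_2(p)$ and kills the odd case. Your main presentation is the same arithmetic unrolled one level: you expand each $e^{\epsilon_j}$ in the $s_1,s_2$ basis, invoke the semicircular-family moment formula (which is itself Fact~\ref{ft:moment-cumulant} specialized to a family with only second cumulants), and then the per-block evaluation $\frac{1+\rho}{2}-(-1)^{[\epsilon_r=*]+[\epsilon_s=*]}\frac{1-\rho}{2}=\rho^{\delta_{\epsilon_r\epsilon_s}}$ is precisely the cumulant computation $\kappa_2(e^{\epsilon_r},e^{\epsilon_s})$ done in-line. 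The only bookkeeping you add is the reorganization of the $2^p$ choices of $s_1$ versus $s_2$ across the blocks of $\pi$, which is just multilinearity of the covariance; the paper's route avoids having to spell that out by packaging it as ``compute the cumulants of $e$ first.'' Either way the content is identical, and your proof would be accepted as written.
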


\noindent  The next lemma is  key in  proving Theorem \ref{thm:general}. It will be used repeatedly. We use the notation: 
$$
a'(r,s)=\d_{i_ri_s}\d_{i_{r+1}i_{s+1}} \mbox{ and } b'(r,s)=\d_{i_ri_{s+1}}\d_{i_si_{r+1}}.
$$
\begin{lemma}\label{lem:wigner}
 Consider $a'(r,s)$ and $b'(r,s)$ as defined above. Then
$$
\lim\frac{1}{n^{k+1}}\sum_{I_{2k}}\sum_{\ell=1}^k\sum_{1\le j_1<\cdots<j_{\ell}\le k}a'(r_{j_1},s_{j_1})\cdots a'(r_{j_{\ell}},s_{j_{\ell}})b'(r_{j_{\ell+1}},s_{j_{\ell+1}})\cdots b'(r_{j_k},s_{j_k})=0.
$$
\end{lemma}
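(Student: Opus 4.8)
The plan is to prove, separately for each fixed nonempty subset $S\subseteq\{1,\dots,k\}$, that
\[
\frac{1}{n^{k+1}}\sum_{I_{2k}}\ \prod_{j\in S}a'(r_j,s_j)\prod_{j\notin S}b'(r_j,s_j)\ \longrightarrow\ 0,
\]
and then sum over the finitely many nonempty $S$ to recover the statement of the lemma. Here the indices are read cyclically, $i_{2k+1}:=i_1$, and $(r_1,s_1),\dots,(r_k,s_k)$ are the blocks of a pair partition $\pi$ of $\{1,\dots,2k\}$ (this is the only situation in which the lemma is used, and it is genuinely needed: an uncovered index would produce a free coordinate). The displayed product of Kronecker deltas forces certain coordinates $i_p$ to coincide; let $N_S$ denote the number of equivalence classes this generates on $\{1,\dots,2k\}$. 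Then the inner sum equals $n^{N_S}$, so it suffices to show $N_S\le k$ whenever $S\neq\emptyset$, since then the displayed quantity is at most $n^{-1}$.

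To estimate $N_S$ I would encode the classes as colours: colour each position $p$ by its equivalence class, so $N_S$ is the number of colours used, and build a multigraph $M$ whose vertices are the colours and which has one edge per block, namely the edge joining the colour of $r_j$ to the colour of $r_j+1$. For a ``$b'$-block'' the relations $\delta_{i_{r_j}i_{s_j+1}}\delta_{i_{s_j}i_{r_j+1}}=1$ say precisely that this is also the edge from the colour of $s_j$ to the colour of $s_j+1$, traversed in the opposite direction; for an ``$a'$-block'' the relations $\delta_{i_{r_j}i_{s_j}}\delta_{i_{r_j+1}i_{s_j+1}}=1$ say it is that same edge, traversed in the same direction. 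Reading $p=1,2,\dots,2k$ now yields a closed walk in $M$ of length $2k$ visiting every colour, so $M$ is connected; since a connected graph on $N_S$ vertices has at least $N_S-1$ distinct edges while $M$ has at most $k$ distinct edges, we obtain $N_S\le k+1$. This part is exactly the argument behind Fact \ref{ft:cardianlity}.

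The improvement to $N_S\le k$ is the only real point, and this is where the hypothesis $S\neq\emptyset$ is used. Suppose $N_S=k+1$. Then $M$ is forced to be a tree with exactly $k$ distinct edges, and as it carries $k$ block-edges mapped onto these $k$ distinct edges, the assignment (block $\mapsto$ edge) is a bijection; in particular the edge carried by block $j$ is traversed by the closed walk only at the two steps $p=r_j$ and $p=s_j$. Pick $j_0\in S$ and let $e_0$ be its edge, between the colour $a$ of $r_{j_0}=s_{j_0}$ and the colour $b$ of $r_{j_0}+1=s_{j_0}+1$; since $e_0$ is a tree edge, $a\neq b$. Deleting $e_0$ splits $M$ into two subtrees, one containing $a$ and one containing $b$; the closed walk visits both, hence crosses this cut, but by the preceding sentence it crosses $e_0$ only at steps $r_{j_0}$ and $s_{j_0}$, and for an $a'$-block both of these traversals go from $a$ to $b$. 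A closed walk must cross any cut equally often in each direction, so this is impossible. Hence $N_S\le k$, as required.

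The routine degenerate configurations (a block with $r_j+1=s_j$, or accidental coincidences among colours) only shrink $M$, hence $N_S$, and are harmless; in the extremal case $N_S=k+1$ the bijection (block $\mapsto$ edge) rules out precisely the coincidences that could cause trouble. Thus the main obstacle is not the counting bound $N_S\le k+1$, which is standard, but the directionality/cut argument of the third paragraph that upgrades it to $N_S\le k$ as soon as one block is of $a'$-type.
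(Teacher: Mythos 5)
Your proof is correct, but it takes a genuinely different route from the paper's. The paper proves the lemma by a \emph{subtraction} argument: it observes that the full trace moment $\vp_n(W_n^{2k})$ of a Gaussian Wigner matrix expands via Wick's formula into the ``all-$b'$'' piece plus exactly the mixed sum in the lemma; it then computes both the total limit (via the known Wigner semicircle theorem) and the ``all-$b'$'' limit (directly via Fact \ref{ft:cardianlity}) and finds they agree, so the difference --- being a sum of nonnegative terms --- must vanish termwise. You instead prove the needed bound directly: for each fixed $\pi\in\mathcal P_2(2k)$ and each nonempty $S\subseteq\{1,\dots,k\}$, you show the number $N_S$ of equivalence classes of indices forced by the deltas satisfies $N_S\le k$ (strictly one less than the generic $k+1$ of Fact \ref{ft:cardianlity}), by building the quotient multigraph, invoking the bijection block $\mapsto$ tree edge that is forced when $N_S=k+1$, and deriving a contradiction from a cut-parity argument using the fact that an $a'$-block traverses its (bridge) edge twice in the same direction. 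This gives $\sum_{I_{2k}}\le n^k$ for each such term, hence $O(1/n)$ after normalizing. Your version is self-contained (it does not invoke Wigner's theorem) and yields the quantitative statement ``$N_S\le k$'' as soon as one block is $a'$-type, which is stronger than what the subtraction gives directly; the paper's version is shorter if one is willing to cite the semicircle law as a black box. Both are valid proofs of the statement.
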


  Now we proceed to prove Theorem \ref{thm:general}. We shall use the following notation: 
 $$
X_{ij}^{\e}=\left\{
\begin{array}{ll}
 X_{ij}& \mbox{ if }\  \e=1,\\
 X_{ji} &\mbox{ if } \ \e=*.
\end{array} \r. 
 $$
  
\begin{proof}[Proof of Theorem \ref{thm:general}]
Let $\e_1,\ldots, \e_p\in \{1,*\}$. Then we have 
\begin{align}\label{eqn:thm1.1}
\vp_{n}\l(\bar {A}_n^{\e_1}\cdots \bar A_n^{\e_p}\r)&=\frac{1}{n^{\frac{p}{2}+1}}\E[\Tr(A_n^{\e_1}\cdots A_n^{\e_p})]=\frac{1}{n^{\frac{p}{2}+1}}\sum_{I_p}\E[ a_{i_1i_2}^{\e_1}a_{i_2i_3}^{\e_2}\cdots a_{i_pi_1}^{\e_p}].
\end{align}
Let $w=(i_1,i_2,\ldots, i_p)$ and supp$(w)$ denote the support of $\{i_1,\ldots,i_p\}$, the set consisting of distinct elements from $w$. Let $G_w$ be the graph with the vertex set supp$(w)$ and the non-directed edge set  $\{\{i_k,i_{k+1}\}\;:\; k=1,\ldots, p \}$ where $i_{p+1}=i_1$. Note that by the construction the graph $G_w$ is connected, which further starts and terminates at the same vertex.  Since $G_w$ is connected, for $j<p$, 
\begin{align}\label{eqn:thm1.2}
|\{w\in I_p\; :\; G_w\mbox{ has atmost $j$ distinct edges}\}|=O(n^{j+1}).
\end{align}

By the construction of $G_w$, a maximum of $p$ distinct non-directed edges is possible.
Since $\E [a_{ij}]=0$, each edge has to appear at least twice to have a non-zero  contribution in the right side of \eqref{eqn:thm1.1}. Therefore we have at most $\lfloor \frac{p}{2}\rfloor$  distinct edges, where $\lfloor x \rfloor $ denotes the largest integer not exceeding $x$. In such cases $|\mbox{supp}(w)|$ has at most $\lfloor \frac{p}{2}\rfloor+1$ elements, as $G_w$ is connected.  Therefore by \eqref{eqn:thm1.2} and the fact that the random variables $a_{ij}$ have all moments finite, if $p$ is odd, we have  
\begin{align*}
\lim_{n\to \infty}\frac{1}{n^{\frac{p}{2}+1}}\sum_{I_p}\E[ a_{i_1i_2}^{\e_1}a_{i_2i_3}^{\e_2}\cdots a_{i_pi_1}^{\e_p}]=\lim_{n\to \infty}\frac{O(n^{\lfloor\frac{p}{2}\rfloor+1})}{n^{\frac{p}{2}+1}}=0.
\end{align*}
Saying that $G_w$ has  $k$ distinct edges  is same as saying that there is a pair partition of the $2k$ edges. Therefore by \eqref{eqn:thm1.2} and the fact that the random variables $a_{ij}$ have all moments finite, for $p=2k$, from \eqref{eqn:thm1.1} we get
\begin{align}\label{eqn:gen1}
\vp_{n}(\bar A_n^{\e_1}\cdots \bar A_n^{\e_{2k}})&=\frac{1}{n^{k+1}}\sum_{I_{2k}}\sum_{\pi \in \mathcal P_2(2k)}\prod_{(r,s)\in \pi}\E[a_{i_ri_{r+1}}^{\e_r}a_{i_si_{s+1}}^{\e_s}]+o(1).
\end{align}
Since $A_n$ satisfies Assumption \ref{assu}, we have
\begin{align*}
&\E[a_{i_ri_{r+1}}^{\e_r}a_{i_si_{s+1}}^{\e_s}]\\&=(a'(r,s)+\rho_{|i_r-i_{r+1}|}b'(r,s))\d_{\e_r\e_s} +(\rho_{|i_r-i_{r+1}|}a'(r,s)+b'(r,s))(1-\d_{\e_r\e_s})
\\&=((1-\d_{\e_r\e_s})+\rho_{|i_r-i_{r+1}|}\d_{\e_r\e_s})b'(r,s)+(\d_{\e_r\e_s}+\rho_{|i_r-i_{r+1}|}(1-\d_{\e_r\e_s}))a'(r,s)
\end{align*}
Using this  and  Lemma \ref{lem:wigner}, as $\d_{\e_r\e_s}+\rho_{|i_r-i_{r+1}|}(1-\d_{\e_r\e_s})\le 1$, in \eqref{eqn:gen1} we get
\begin{align*}
\lim_{n\to \infty}\vp_{n}\l(\bar {A}_n^{\e_1}\cdots \bar A_n^{\e_{2k}}\r)
=&\sum_{\pi\in \mathcal P_2(2k)}\lim_{n\to \infty}\frac{1}{n^{k+1}}\sum_{I_{2k}}\prod_{(r,s)\in \pi}((1-\d_{\e_r\e_s})+\rho_{|i_r-i_{r+1}|}\d_{\e_r\e_s})b'(r,s)
\\=&\sum_{\pi\in \mathcal P_2(2k)}\lim_{n\to \infty}\frac{1}{n^{k+1}}\sum_{I_{2k}}\prod_{(r,s)\in \pi}\rho_{|i_r-i_{r+1}|}^{\d_{\e_r\e_s}}\d_{i_ri_{s+1}}\d_{i_si_{r+1}}.
\end{align*} 
Observe that $((1-\d_{\e_r\e_s})+\rho_{|i_r-i_{r+1}|}\d_{\e_r\e_s})=\rho_{|i_r-i_{r+1}|}^{\d_{\e_r\e_s}}$. Since $\rho_i$ are bounded, using Fact \ref{ft:cardianlity}, the right side gives non-zero contribution when $\pi$ is a non-crossing pair matching. Therefore we get 
\begin{align*}
\lim_{n\to \infty}\vp_{n}\l(\bar {A}_n^{\e_1}\cdots \bar A_n^{\e_{2k}}\r)&=\sum_{\pi\in NC_2(2k)}\lim_{n\to \infty}\frac{1}{n^{k+1}}\sum_{I_{2k}}\prod_{(r,s)\in \pi}\rho_{|i_r-i_{s}|}^{\d_{\e_r\e_s}}\d_{i_ri_{s+1}}\d_{i_si_{r+1}},
\end{align*} 
as $i_{r+1}=i_s$. The existence of $*$-distribution limit then follows from Condition \eqref{eqn:condition}.

\noindent {\bf Particular case:} Now suppose $\rho_{i}\equiv \rho$. Therefore, by  Remark \ref{remark} (I), we have
\begin{align*}
\lim_{n\to \infty}\vp_{n}\l(\bar {A}_n^{\e_1}\cdots \bar A_n^{\e_{2k}}\r)&=\sum_{\pi\in NC_2(2k)}\rho^{T(\pi)}=\vp(e^{\e_1}\cdots e^{\e_{2k}}).
\end{align*} 
The last equality follows from Lemma \ref{lem:moments of e}. Hence the result. 
\end{proof}

It remains to prove Lemma \ref{lem:moments of e} and Lemma \ref{lem:wigner}.
\begin{proof}[Proof of Lemma \ref{lem:moments of e}]
First we calculate the free cumulants and mixed free cumulants of $e$ and $e^*$. We have $\kappa_{2}(s_i,s_i)=1$ for $i=1,2$ and $\kappa_{2}(s_1,s_2)=\kappa_2(s_2,s_1)=0$, as $s_1$ and $s_2$ are two standard semi-circular elements and free. Therefore
\begin{align*}
\kappa_2(e,e)&=\kappa_2\l(\sqrt{\frac{1+\rho}{2}}s_1+i\sqrt{\frac{1-\rho}{2}}s_2,\sqrt{\frac{1+\rho}{2}}s_1+i\sqrt{\frac{1-\rho}{2}}s_2\r)
\\&=\frac{1+\rho}{2}\cdot\kappa_2(s_1,s_1)-\frac{1-\rho}{2}\cdot\kappa_2(s_2,s_2)
=\rho.
\end{align*}
Similarly we have $\kappa_2(e^*,e^*)=\rho$, $\kappa_2(e,e^*)=\kappa_2(e^*,e)=1$ and  other free cumulants are zero.
From Fact \ref{ft:moment-cumulant}, we have 
\begin{align*}
\vp(e^{\epsilon_1}e^{\epsilon_2}\cdots e^{\epsilon_{p}})&
=\sum_{\pi\in NC(p)}\prod_{V\in \pi}\kappa_\pi(V)[e^{\epsilon_1},\ldots ,e^{\epsilon_p}].
\end{align*}
Note that only pair partitions will  contribute as the other free cumulants are zero. Therefore if $p$ is odd then  right side of the last equation is zero, as no pair partition is  possible.
And, for $p=2k$, we get
\begin{align*}
\vp(e^{\epsilon_1}e^{\epsilon_2}\cdots e^{\epsilon_{2k}})&= \sum_{\pi\in NC_2(2k)}\prod_{(r,s)\in \pi}\kappa_{\pi}[e^{\epsilon_r} ,e^{\epsilon_s}]=\sum_{\pi\in NC_2(2k)}\rho^{T(\pi)}.
\end{align*}
 Hence the result.
\end{proof}

\begin{proof}[Proof of Lemma \ref{lem:wigner}]
Let $W_n=\l(\frac{Y_{ij}}{\sqrt n}\r)$ be a Wigner matrix, where $(Y_{ij})_{i\le j}$ are i.i.d. $N(0,1)$ and $Y_{ij}=Y_{ji}$. It is well known (e.g., see Theorem 22.16 in \cite{speicherbook}) that $W_n$ converges, as $n\to \infty$, in distribution to a semi-circular element $s$. In other words, $\lim_{n\to \infty} \vp_n(W_n^p)=\vp(s^p)$ for all $p\in \N$. In particular, for $p=2k$, we have
\begin{align}\label{eqn:wigner}
\lim_{n\to\infty}\vp_n(W_n^{2k})=\vp(s^{2k})=\sum_{\pi\in NC_2(2k)}1.
\end{align}
By the trace formula for product of matrices we have
\begin{align*}
\vp_n(W_n^{2k})&=\frac{1}{n^{k+1}}\sum_{I_{2k}}\E[Y_{i_1i_2}Y_{i_2i_3}\cdots Y_{i_{2k}i_1}]
\\ &=\frac{1}{n^{k+1}}\sum_{I_{2k}}\sum_{\pi\in \mathcal P_2(2k)}\prod_{(r,s)\in \pi}\E[Y_{i_ri_{r+1}}Y_{i_si_{s+1}}],
\end{align*}
where the last equality follows from Wick's formula. Again $\E[Y_{i_ri_{r+1}}Y_{i_si_{s+1}}]=a'(r,s)+b'(r,s)$. Recall that $a'(r,s)=\d_{i_ri_{s}}\d_{i_{r+1}i_{s+1}}$ and $b'(r,s)=\d_{i_ri_{s+1}}\d_{i_si_{r+1}}$.
Therefore we have
\begin{align*}
\vp_n(W_n^{2k})=\frac{1}{n^{k+1}}\sum_{\pi\in \mathcal P_2(2k)}\sum_{I_{2k}}\prod_{j=1}^k(a'(r_{j},s_{j})+b'(r_{j},s_{j})),
\end{align*}
where  $\pi=\{(r_j,s_j)\; :\; j=1,\ldots, k\}$. Let $a_j'=a'(r_{j},s_{j})$ and $b_j'=b(r_{j},s_{j}')$, then by expanding the product we get
\begin{align}\label{eqn:wigner2}
\vp_n(W_n^{2k})=\frac{1}{n^{k+1}}\sum_{I_{2k}}\sum_{\pi\in \mathcal P_2(2k)}\l( \prod_{j=1}^kb_j'+\sum_{\ell=1}^k\sum_{1\le j_1<\ldots<j_{\ell}\le k}a_{j_1}'\cdots a_{j_{\ell}}'b_{j_{\ell+1}}'\cdots b_{j_k}'\r),
\end{align}
 Again we have 
\begin{align}\label{eqn:contribuingwig}
\lim_{n\to \infty}\frac{1}{n^{k+1}}\sum_{\pi\in \mathcal P_2(2k)}\sum_{I_{2k}}\prod_{j=1}^kb_j'&=\lim_{n\to \infty}\frac{1}{n^{k+1}}\sum_{\pi\in \mathcal P_2(2k)}\sum_{I_{2k}}\prod_{j=1}^k\d_{i_{r_j}i_{s_j+1}}\d_{i_{s_j}i_{r_j+1}}\nonumber
\\&=\lim_{n\to \infty}\frac{1}{n^{k+1}}\sum_{\pi\in \mathcal P_2(2k)}\sum_{I_{2k}}\prod_{r=1}^{2k}\d_{i_{r}i_{\gamma \pi(r)}}\nonumber
\\&=\lim_{n\to \infty}\frac{1}{n^{k+1}}\sum_{\pi\in \mathcal P_2(2k)} n^{|\gamma \pi|}
=\sum_{\pi\in NC_2(2k)}1,
\end{align}
where the last equality is a consequence of Fact \ref{ft:cardianlity}. The result now follows from \eqref{eqn:wigner}, \eqref{eqn:wigner2} and \eqref{eqn:contribuingwig}.
\end{proof}

\section{Proof of Theorem \ref{thm2}}\label{sec:asypt}

We first state a fact and a lemma. 
The proof of the lemma is given at the end of this section.
\begin{fact}\label{ft:freenessproperty}
Let $(\mathcal A,\vp)$ be an NCP and  $a_1,\ldots, a_n,b_1,\ldots, b_n\in \mathcal A$. Then  $\{a_1,\ldots,a_n\}$ and $\{b_1,\ldots,b_n\}$ are free if and only if, for all $k\in \N$, $\e_1,\ldots,\e_k,\tau_1,\ldots, \tau_k\in \{0,1,*\}$ and $i_1,\ldots, i_k,j_1,\ldots, j_k\in \{1,\ldots, n\}$,
$$
\vp(a_{i_1}^{\e_1}b_{j_1}^{\tau_1}a_{i_2}^{\e_2}b_{j_2}^{\tau_2}\cdots a_{i_k}^{\e_k}b_{j_k}^{\tau_k})=\sum_{\pi\in NC(k)}\kappa_{\pi}[a_{i_1}^{\e_1},\ldots,a_{i_k}^{\e_k}]\cdot \vp_{K(\pi)}[b_{j_1}^{\tau_1},\ldots,b_{j_k}^{\tau_k}],
$$
where $K(\pi)$ denotes the Kreweras complement of $\pi$.
\end{fact}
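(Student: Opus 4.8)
The plan is to deduce the identity from Speicher's characterization of freeness: two unital $*$-subalgebras of $(\mathcal A,\vp)$ are free precisely when all \emph{mixed} free cumulants vanish, i.e. $\kappa_m(c_1,\ldots,c_m)=0$ whenever the arguments come from the two algebras and at least one comes from each (see \cite{speicherbook}). Assume first that $\{a_1,\ldots,a_n\}$ and $\{b_1,\ldots,b_n\}$ are free. Apply the moment--cumulant formula (Fact \ref{ft:moment-cumulant}) to the length-$2k$ word $c_1c_2\cdots c_{2k}$ with $c_{2\ell-1}=a_{i_\ell}^{\e_\ell}$ and $c_{2\ell}=b_{j_\ell}^{\tau_\ell}$, so that
$$\vp(c_1c_2\cdots c_{2k})=\sum_{\sigma\in NC(2k)}\kappa_\sigma[c_1,\ldots,c_{2k}].$$
When an exponent equals $0$ one reads the entry as the unit and uses $\kappa_1(1)=1$, $\kappa_m(\ldots,1,\ldots)=0$ for $m\ge 2$, which merely forces the corresponding point to be a singleton block. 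Vanishing of mixed cumulants kills every $\sigma$ possessing a block that meets both the odd slots (carrying the $a$'s) and the even slots (carrying the $b$'s); hence only $\sigma$ of the form $\sigma_a\sqcup\sigma_b$ survive, where $\sigma_a$ partitions the odd slots and $\sigma_b$ the even slots. Identifying the odd slots and the even slots with $\{1,\ldots,k\}$ in their natural order turns these into $\pi,\pi'\in NC(k)$, and the requirement that $\sigma_a\sqcup\sigma_b$ lie in $NC(2k)$ is exactly the condition $\pi'\le K(\pi)$ — this is the defining property of the Kreweras complement. Therefore
$$\vp(c_1c_2\cdots c_{2k})=\sum_{\pi\in NC(k)}\kappa_\pi[a_{i_1}^{\e_1},\ldots,a_{i_k}^{\e_k}]\Big(\sum_{\pi'\le K(\pi)}\kappa_{\pi'}[b_{j_1}^{\tau_1},\ldots,b_{j_k}^{\tau_k}]\Big),$$
and the inner sum equals $\vp_{K(\pi)}[b_{j_1}^{\tau_1},\ldots,b_{j_k}^{\tau_k}]$ by applying the moment--cumulant formula on each block of $K(\pi)$. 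This is the asserted identity.

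For the converse, suppose the displayed identity holds for every $k$ and every choice of exponents and indices. Since any element of the unital $*$-algebra generated by the $a$'s (respectively the $b$'s) is a polynomial in $a_i,a_i^*$ (respectively $b_j,b_j^*$), and since the exponent value $0$ allows the unit to be inserted in any slot so that arbitrary words in the $a$'s and $b$'s are covered, the identity expresses every mixed $*$-moment of $\{a_1,\ldots,a_n\}\cup\{b_1,\ldots,b_n\}$ as one fixed universal function of the marginal $*$-distributions of $\{a_1,\ldots,a_n\}$ and of $\{b_1,\ldots,b_n\}$. By the ``only if'' part already established, a genuinely free pair of families with the same marginal $*$-distributions — which exists, for instance inside the reduced free product of the two generated algebras — obeys the very same identity and hence has the same joint $*$-distribution. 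As freeness depends only on the joint $*$-distribution, $\{a_1,\ldots,a_n\}$ and $\{b_1,\ldots,b_n\}$ are free. (Alternatively, one may argue directly by induction on $k$: M\"obius-invert the identity to compute the full free cumulants of the word, compare with the cumulants predicted by vanishing of mixed cumulants, and conclude inductively that all mixed cumulants vanish.)

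The main obstacle is the combinatorial step in the first paragraph: the bijection between non-crossing partitions of $\{1,\ldots,2k\}$ that never join an odd point to an even point and pairs $(\pi,\pi')\in NC(k)\times NC(k)$ with $\pi'\le K(\pi)$, together with the blockwise summation $\sum_{\pi'\le\rho}\kappa_{\pi'}=\vp_\rho$. Both are standard facts about the lattice $NC$ and the Kreweras complement, but that is where all the content lies; the rest is bookkeeping, plus, for the converse, the mild point of invoking a free product realization (or replacing it by the inductive Möbius argument).
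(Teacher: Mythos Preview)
Your proof is correct. The paper does not give its own argument here: it cites \cite{speicherbook}, p.~226, for the forward direction and merely asserts that the converse ``follows easily.'' Your forward direction is exactly the argument on that page of Nica--Speicher (moment--cumulant formula, vanishing of mixed cumulants, and the Kreweras bijection $\{\sigma\in NC(2k):\text{no block mixes odd and even}\}\leftrightarrow\{(\pi,\pi')\in NC(k)^2:\pi'\le K(\pi)\}$), so you are reproducing precisely what the paper points to. Your converse via a free-product realization is a standard and efficient way to make ``follows easily'' precise; the alternative M\"obius-inversion route you mention would also work and is closer in spirit to a direct verification that mixed cumulants vanish.
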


\noindent For the proof of the forward direction of Fact \ref{ft:freenessproperty} we refer to \cite{speicherbook}, p. 226. The other direction follows easily. For the next lemma, we need the following notation:
\begin{align*}
\mbox{$u_{\ell}'=\d_{i_{r_{\ell}}i_{s_{\ell}}}\d_{j_{r_{\ell}}j_{s_{\ell}}}$, $v_{\ell}'=\d_{i_{r_{\ell}}j_{s_{\ell}}}\d_{j_{r_{\ell}}i_{s_{\ell}}}$ and }
\end{align*} 
$$
S_t=\{(\{\ell_1,\ldots, \ell_t\},\{\ell_{t+1},\ldots, \ell_k\})\;:\;\{\ell_1,\ldots, \ell_t\}\cup \{\ell_{t+1},\ldots, \ell_k\}=\{1,\ldots, k\}\}.
$$

\begin{lemma}\label{lem:forfreeness}
Let $D_n$ be a sequence of deterministic matrices which converges in  $*$-distribution  to an element  $d$. Then, for $\pi=\{(r_1,s_1),\ldots, (r_k,s_k)\}$,
\begin{align}\label{eqn:forfree}
\lim_{n\to \infty}\frac{1}{n^{k+1}}\sum_{I_{2k},J_{2k}}\sum_{\pi\in \mathcal P_2(2k)}\l(\sum_{t=1}^{k}\sum_{S_t}u_{\ell_1}'\cdots u_{\ell_t}'v_{\ell_{t+1}}'\cdots v_{\ell_{k}}'
\r)\prod_{\ell=1}^{k}D^{(\ell)}_{j_{\ell}i_{\ell+1}}=0,
\end{align}
where  $D_n^{(\ell)}=D_n^{\tau_1}\cdots D_n^{\tau_{q_{\ell}}}$ for $q_i\in \N$ and $\tau_i\in \{0,1,*\}$.
\end{lemma}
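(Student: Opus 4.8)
The plan is to mirror the structure of the proof of Lemma \ref{lem:wigner}, where the "pure $b'$" term survived and everything containing at least one $a'$ vanished; here the roles are played by $v'_\ell$ (the "crossing" pairing, which is what contributes to the free-probabilistic answer) and $u'_\ell$ (the "parallel" pairing, which must be shown negligible once a deterministic matrix is interleaved). First I would observe that the summand is controlled in absolute value by the corresponding sum with the $D$-entries replaced by their absolute values, and since $D_n$ converges in $*$-distribution, all its matrix entries and all normalized traces of words in $D_n, D_n^*$ are bounded uniformly in $n$; in particular $\max_{i,j}|(D_n^{(\ell)})_{ij}|$ may be large, so one cannot bound entrywise but must instead count solutions to the delta constraints and keep track of how many free indices the $D$-factors tie together. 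The key point is a counting estimate: fix $\pi\in\mathcal P_2(2k)$ and a nonempty set $\{\ell_1,\dots,\ell_t\}$ of indices carrying a $u'$-factor. Each $u'_\ell=\d_{i_{r_\ell}i_{s_\ell}}\d_{j_{r_\ell}j_{s_\ell}}$ imposes that a pair of $i$-indices coincide \emph{and} a pair of $j$-indices coincide, whereas $v'_\ell=\d_{i_{r_\ell}j_{s_\ell}}\d_{j_{r_\ell}i_{s_\ell}}$ only cross-identifies $i$- with $j$-indices; the presence of even one $u'$ reduces the number of free index choices, relative to the all-$v'$ term, by at least one factor of $n$.

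The main steps, in order, are as follows. (1) Expand and reduce to estimating, for each fixed $\pi$ and each fixed partition $(\{\ell_1,\dots,\ell_t\},\{\ell_{t+1},\dots,\ell_k\})\in S_t$ with $t\ge 1$, the quantity $\frac{1}{n^{k+1}}\sum_{I_{2k},J_{2k}} u'_{\ell_1}\cdots u'_{\ell_t}v'_{\ell_{t+1}}\cdots v'_{\ell_k}\prod_{\ell=1}^k (D_n^{(\ell)})_{j_\ell i_{\ell+1}}$. (2) Interpret the delta functions as gluing instructions on the $2k$ formal $i$-slots and $2k$ formal $j$-slots, and reorganize the sum over the free (i.e., unconstrained after gluing) indices so that each surviving free index appears in at most a bounded number of $D$-entries; then bound the inner sum over one free index appearing in two consecutive $D$-factors by a Cauchy--Schwarz / $\ell^2$-trace estimate of the form $\sum_a |(D_n^{(\ell)})_{ba}(D_n^{(\ell')})_{ac}| \le \|D_n^{(\ell)}\|_{\mathrm{op}}\|D_n^{(\ell')}\|_{\mathrm{op}}$ is \emph{not} available (operator norms need not be bounded), so instead I would use that $\frac1n\sum_{a,b}|(D_n^{(\ell)})_{ab}|^2 = \varphi_n(D_n^{(\ell)}(D_n^{(\ell)})^*)$ is bounded, i.e. control the Hilbert--Schmidt norm, which suffices because each free index is summed against at most two $D$-entries. (3) Combine the counting bound from step (2): after accounting for all gluings, the all-$v'$ configuration gives exactly $n^{k+1}$ admissible index-tuples (this is the content of the companion computation, analogous to \eqref{eqn:contribuingwig}, and reproduces $\vp_{K(\pi)}[\cdots]$ via Fact \ref{ft:freenessproperty}), while any configuration with $t\ge 1$ loses at least one power of $n$, so after dividing by $n^{k+1}$ it tends to $0$. (4) Sum over the finitely many $\pi$ and finitely many elements of $\bigcup_t S_t$ with $t\ge1$ to conclude.

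The hard part will be step (2): controlling the sum when a single free summation index is shared between two $D^{(\ell)}$-factors whose operator norms are not bounded. The clean way around this is to note that after imposing all the $\delta$-constraints coming from a mixed ($u',v'$) configuration, the index-identification graph on the $i$- and $j$-slots, together with the "$D$-edges" $j_\ell \leftrightarrow i_{\ell+1}$, forms a connected structure, and one can peel off $D$-edges one at a time: a leaf index summed against one $D$-entry contributes a factor $\max_{i,j}|(D_n^{(\ell)})_{ij}|$ which, while possibly unbounded, is $O(n^{1/2})$ by the Hilbert--Schmidt bound, and is compensated by the fact that a mixed configuration has strictly fewer independent index cycles than the all-$v'$ one. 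Making the bookkeeping of "number of independent cycles minus number of dangerous leaves" rigorous — i.e. showing the net power of $n$ is always $\le k$ once $t\ge1$ — is where the real work lies; everything else is the same pair-partition/non-crossing counting already used for Lemma \ref{lem:wigner}, now carried out on the doubled index set and matched against the Kreweras-complement formula of Fact \ref{ft:freenessproperty}.
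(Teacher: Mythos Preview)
Your approach differs substantially from the paper's. The paper proves Lemma~\ref{lem:forfreeness} by \emph{induction on $k$}: for the base case $k=1$ one computes directly that the single $u'$-term equals $n^{-2}\Tr(D^{(1)}D^{(2)*})\cdot n\to 0$; for the inductive step, one fixes a pair $(r_1,s_1)$ carrying a $u'$-factor and sums out the four indices $i_{r_1},i_{s_1},j_{r_1},j_{s_1}$. The point is that this summation \emph{collapses several $D^{(\ell)}$-factors into a single product matrix} of the same form $D_n^{\tau_1}\cdots D_n^{\tau_q}$ (e.g.\ $D^{(r_1-1)}D^{(r_1)*}D^{(s_1)}$ when $s_1=r_1+1$, or two such products otherwise), so after renaming the remaining $2k$ indices one lands exactly on the expression for $k$ instead of $k+1$, but with the normalization $n^{-(k+2)}$. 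If the reduced expression still contains a $u'$, the induction hypothesis applies; if not, it is the pure-$v'$ term, which by Fact~\ref{ft:cardianlity} contributes at most $n^{k+1}$ and hence vanishes against $n^{-(k+2)}$.

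The crucial advantage of the paper's argument is that it \emph{never} touches individual matrix entries or invokes Hilbert--Schmidt bounds: everything stays organized as normalized traces of words in $D_n,D_n^*$, which are controlled directly by the assumed $*$-distributional convergence. Your proposed route, by contrast, requires the bookkeeping you flag as ``where the real work lies,'' and that bookkeeping is genuinely delicate. The specific compensation you suggest---each leaf costs $O(n^{1/2})$ via $\max_{i,j}|(D_n^{(\ell)})_{ij}|\le (\Tr D_n^{(\ell)}D_n^{(\ell)*})^{1/2}=O(n^{1/2})$, while a $u'$-factor removes a full cycle---does not obviously balance: a priori you might lose one cycle (a factor $n$) but create two leaves (a factor $n^{1/2}\cdot n^{1/2}=n$), netting zero gain. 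Making this precise would require a careful graph-theoretic analysis of how $u'$-constraints interact with the $D$-edge cycle structure, which is strictly more work than the paper's two-line collapsing step. I would recommend switching to the inductive argument.
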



\begin{proof}[Proof of Theorem \ref{thm2}]
 Let $\tau_1,\ldots ,\tau_p\in \{1,2,\ldots, m\}$ and $\e_1,\ldots,\e_p\in \{1,*\}$. Then 
$$
\vp_n(\bar A_n^{(\tau_1)\epsilon_1}\bar A_n^{(\tau_2)\epsilon_2}\cdots \bar A_n^{(\tau_{p})\epsilon_{p}})=\frac{1}{n^{\frac{p}{2}+1}}\sum_{I_{2k}}\E[a_{i_1i_2}^{(\tau_1)\e_1}a_{i_2i_3}^{(\tau_2)\e_2}\cdots a_{i_{p}i_1}^{(\tau_{p})\e_{p}}].
$$
Since the random variables $a_{ij}$ have mean zero and all moments finite, using the arguments as in the proof of Theorem \ref{thm:general} we have
$$
\lim_{n\to \infty}\vp_n(\bar A_n^{(\tau_1)\epsilon_1}\bar A_n^{(\tau_2)\epsilon_2}\cdots \bar A_n^{(\tau_p)\epsilon_p})=0,
$$
when $p$ is odd. Similarly, for $p=2k$, we have 
\begin{align*}
\frac{1}{n^{k+1}}\sum_{I_{2k}}\E[a_{i_1i_2}^{(\tau_1)\e_1}a_{i_2i_3}^{(\tau_2)\e_2}\cdots a_{i_{2k}i_1}^{(\tau_{2k})\e_{2k}}]=\frac{1}{n^{k+1}}\sum_{I_{2k}}\sum_{\pi \in \mathcal P_2(2k)}\prod_{(r,s)\in \pi}\E[a_{i_ri_{r+1}}^{(\tau_r)\e_r}a_{i_si_{s+1}}^{(\tau_s)\e_s}]+o(1).
\end{align*}
Since $A_n^{(1)}, \ldots, A_n^{(m)}$ are independent elliptic matrices, there exist $ \rho_1,\ldots, \rho_m$ such that $\E[a_{ij}^{(\tau)}a_{ji}^{(\tau)}]=\rho_{\tau}$ for $\tau=1,\ldots, m$ and $1\le i,j \le n$.
Let $a_{\tau}(r,s)=(\rho_{\tau}+(1-\rho_{\tau})\d_{\e_r\e_s})a'(r,s)$ and $b_{\tau}(r,s)=(1-(1-\rho_{\tau})\d_{\e_r\e_s})b'(r,s)$. Then
\begin{align*}
\E[a_{i_ri_{r+1}}^{(\tau_r)\e_r}a_{i_si_{s+1}}^{(\tau_s)\e_s}]=(a_{\tau}(r,s)+b_{\tau}(r,s))\d_{\tau_r\tau_s}^{\tau_{rs}}, \mbox{ where  $\tau_{rs}\in \{1,\ldots, m\}$,}
\end{align*}
where $\d_{\tau_r\tau_s}^{\tau_{rs}}=1$ when $\tau_r=\tau_s=\tau_{rs}$ and zero otherwise.
Note that $a(r,s)\le a'(r,s)$ and $b(r,s)\le b'(r,s)$. Therefore combining all and using Lemma \ref{lem:wigner} we have 
\begin{align}\label{eqn:twomatrixfree1}
\lim_{n\to \infty}\vp_n(A_n^{(\tau_1)\epsilon_1}A_n^{(\tau_2)\epsilon_2}\cdots A_n^{(\tau_{2k})\epsilon_{2k}})&=\lim_{n\to \infty}\frac{1}{n^{k+1}}\sum_{I_{2k}}\sum_{\pi \in \mathcal P_2(2k)}\prod_{(r,s)\in \pi}b_{\tau_{rs}}(r,s)\d_{\tau_r\tau_s}^{\tau_{rs}}\nonumber
\\&=\sum_{\pi \in NC_2(2k)}\rho_1^{T_1(\pi)}\ldots \rho_m^{T_m(\pi)}\prod_{(r,s)\in \pi}\d_{\tau_r\tau_{s}},
\end{align}
where $T_{\tau}(\pi):=|\{(r,s)\in \pi \;:\; \d_{\e_r\e_s}=1, \tau_r=\tau_s=\tau\}|$.
 
 On the other hand,  if $e_1,\ldots, e_m$ are free elliptic  with parameters $\rho_1,\ldots, \rho_m$ respectively, then by Fact \ref{ft:moment-cumulant} we have 
\begin{align*}
\vp(e_{\tau_1}^{\e_1}e_{\tau_2}^{\e_2}\cdots e_{\tau_{2k}}^{\e_{2k}})=\sum_{\pi\in NC(2k)}\prod_{V\in \pi}\kappa(V)[e_{\tau_1}^{\e_1},\ldots,e_{\tau_{2k}}^{\e_{2k}}].
\end{align*}
Moreover, $\kappa(e_i,e_i)=\kappa(e_i^*,e_i^*)=\rho_i$ and $\kappa(e_i,e_i^*)=\kappa(e_i^*,e_i)=1$ for $i=1,2$ and all other free cumulants are zero. Therefore we get
\begin{align}\label{eqn:twomatrixfree2}
\vp(e_{\tau_1}^{\e_1}e_{\tau_2}^{\e_2}\cdots e_{\tau_{2k}}^{\e_{2k}})=\sum_{\pi \in NC_2(2k)}\rho_1^{T_1(\pi)}\ldots \rho_m^{T_m(\pi)}\prod_{(r,s)\in \pi}\d_{\tau_r\tau_{s}}.
\end{align}
From \eqref{eqn:twomatrixfree1}, \eqref{eqn:twomatrixfree2} and Fact \ref{ft:freenessproperty} we conclude that $A_n^{(1)},\ldots, A_n^{(m)}$ converge, as $n\to \infty$, to $e_1,\ldots, e_m$ in $*$-distribution sense and are  asymptotically free.

\vspace{.2cm}
\noindent {\it Proof of the second part:} Suppose now that the entries of $A_n$ are Gaussian. For the ease of writing we give the proof only for $m=1$ and $\ell=1$. The general case can be tackled in a similar way.
Let $\e_1,\ldots, \e_p \in \{1,*\}$, and $D_n^{(\ell)}=D_n^{\e_1}\cdots D_n^{\e_{q_{\ell}}}$ for $q_1,\ldots, q_p\in \N\cup \{0\}$ with the convention that $D^{(\ell)}=I$ if $q_{\ell}=0$. Then the mixed $*$-moment of $\bar A_n$ and $D_n$ are  given by
\begin{align*}
\vp_n(\bar A_n^{\epsilon_1}D_n^{(1)}\cdots \bar A_n^{\e_p}D_n^{(p)})=\frac{1}{n^{\frac{p}{2}}+1}\sum_{I_p,J_p}\E[a_{i_1j_1}^{\e_1}a_{i_2j_2}^{\e_2}\cdots a_{i_pj_{p}}^{\e_p}]\prod_{\ell=1}^pD^{(\ell)}_{j_{\ell}i_{\ell+1}}.
\end{align*}
The entries $a_{ij}$ are mean zero Gaussian random variables. Therefore, by Wick's formula, if $p$ is odd,
 \begin{align*}
 \vp_n(\bar A_n^{\epsilon_1}D_n^{(1)}\cdots \bar A_n^{\e_p}D_n^{(p)})=0.
 \end{align*}
Let $p=2k$. Then by Wick's formula   we have
\begin{align*}
\vp_n(\bar A_n^{\epsilon_1}D_n^{(1)}\cdots \bar A_n^{\e_{2k}}D_n^{(2k)})
&=\frac{1}{n^{k+1}}\sum_{I_{2k},J_{2k}}\sum_{\pi\in \mathcal P_2(2k)}\prod_{(r,s)\in \pi}\E[a_{i_rj_r}^{\e_r}a_{i_sj_s}^{\e_s}]\prod_{\ell=1}^{2k}D^{(\ell)}_{j_{\ell}i_{\ell+1}}
\\&=\frac{1}{n^{k+1}}\sum_{I_{2k},J_{2k}}\sum_{\pi\in \mathcal P_2(2k)}\prod_{(r,s)\in \pi}(u(r,s)+v(r,s))\prod_{\ell=1}^{2k}D^{(\ell)}_{j_{\ell}i_{\ell+1}},
\end{align*}
where $u(r,s)=(\rho+(1-\rho)\d_{\e_r\e_s})\d_{i_ri_s}\d_{j_rj_s}$ and $v(r,s)=(1-(1-\rho)\d_{\e_r\e_s})\d_{i_rj_s}\d_{j_ri_s}$. Note that $u(r,s)\le \d_{i_ri_s}\d_{j_rj_s}$ and $v(r,s) \le \d_{i_rj_s}\d_{j_ri_s}$. Then applying Lemma \ref{lem:forfreeness} in the last equation we get
\begin{align*}
&\lim_{n\to\infty}\vp_n(\bar A_n^{\epsilon_1}D_n^{(1)}\cdots \bar A_n^{\e_{2k}}D_n^{(2k)})
\\=&\frac{1}{n^{k+1}}\sum_{I_{2k},J_{2k}}\sum_{\pi\in \mathcal P_2(2k)}\prod_{(r,s)\in \pi}v(r,s)\prod_{\ell=1}^{2k}D^{(\ell)}_{j_{\ell}i_{\ell+1}}
\\=&\lim_{n\to\infty}\frac{1}{n^{k+1}}\sum_{\pi\in\mathcal P_2(2k)}\prod_{(r,s)\in \pi}(1-(1-\rho)\d_{\e_r\e_s})\sum_{I_{2k},J_{2k}}\prod_{\ell=1}^{2k}\d_{i_{{\ell}}j_{\pi({\ell})}}
\prod_{\ell=1}^{2k}D^{(\ell)}_{j_{\ell}i_{\gamma(\ell)}},
\\=&\sum_{\pi\in\mathcal P_2(2k)}\rho^{T(\pi)}\cdot \lim_{n\to\infty}\frac{1}{n^{k+1}}\sum_{J_{2k}}\prod_{\ell=1}^{2k}D^{(\ell)}_{j_{\ell}j_{\pi\gamma(\ell)}}.
\end{align*}
 Again we have 
\begin{align*}
\sum_{J_{2k}}\prod_{\ell=1}^{2k}D^{(\ell)}_{j_{\ell}j_{\pi\gamma(\ell)}}=\tr_{\pi \gamma}[D_n^{(1)},\ldots,D_n^{(2k)}]\cdot n^{|\gamma\pi|}.
\end{align*}
Since $D_n\stackrel{*\mbox{-dist}}{\longrightarrow} d$, it follows that 
$$
\tr_{\pi \gamma}[D_n^{(1)},\ldots,D_n^{(2k)}]\to \vp_{\pi \gamma}[d^{(1)},\ldots,d^{(2k)}],\;\;\mbox{as $n\to \infty$,}
$$ 
 where $d^{(s)}=d^{\e_1}\cdots d^{\e_{q_s}}$ and $d^{(s)}=1$ if $q_s=0$ for $s=1,\ldots, 2k$ . We explain the notation $\vp_{\pi\gamma}$ by an example. If $\pi \gamma=(135)(2)(4)(6)$, then $\vp_{\pi \gamma}[d^{(1)}, \ldots, d^{(6)}]=\vp(d^{(1)}d^{(3)}d^{(5)})\vp(d^{(2)})\vp(d^{(4)})\vp(d^{(6)})$. Therefore by Fact \ref{ft:cardianlity}, $|\gamma \pi|=k+1$ when $\pi$ is a non-crossing pair partition, we get 
\begin{align}\label{eqn:traceformula}
\lim_{n\to\infty}\vp_n(\bar A_n^{\epsilon_1}D_n^{(1)}\cdots \bar A_n^{\e_{2k}}D_n^{(2k)})
=\sum_{\pi\in NC_2(k)}\rho^{T(\pi)}\vp_{\pi \gamma}[d^{(1)},\ldots,d^{(2k)}].
\end{align}
Moreover, if $e$ and $d$ are free, then from Fact \ref{ft:freenessproperty} we have
\begin{align}\label{eqn:trace2}
\vp(e^{\e_1}d^{(1)}\cdots e^{\e_{2k}}d^{(2k)})&=\sum_{\pi\in NC(2k)}\varphi(e^{\e_1}\cdots e^{\e_k})\vp_{K(\pi)}[d^{(1)},\ldots,d^{(2k)}]\nonumber
\\&=\sum_{\pi\in NC_2(2k)}\rho^{T(\pi)}\vp_{\pi \gamma}[d^{(1)},\ldots,d^{(2k)}].
\end{align}
The result now follows from \eqref{eqn:traceformula}, \eqref{eqn:trace2} and Fact \ref{ft:freenessproperty}.
\end{proof}

It remains to prove Lemma \ref{lem:forfreeness}. 
We first exemplify the proof for a few special cases. Let $p=4$ and $\pi=(12)(34)$. Then, for self-matching among $i$'s and $j$'s,
\begin{align*}
\lim_{n\to \infty}\frac{1}{n^3}
\sum_{I_4,J_4}\d_{i_1i_2}\d_{j_1j_2}\d_{i_3i_4}\d_{j_3j_4}\prod_{1}^4D^{(\ell)}_{j_{\ell}i_{\ell+1}}
=&\lim_{n\to \infty}\frac{1}{n^3}\sum_{i_1,j_1,i_3,j_3}D^{(1)}_{j_1i_1}D^{(2)}_{j_1i_3}D^{(3)}_{j_3i_3}D^{(1)}_{j_3i_1}
\\=\lim_{n\to \infty}\frac{1}{n^3}\l\{\tr(D_n^{(1)*}D_n^{(2)}D_n^{(3)*}D_n^{(4)})\cdot n\r\}=&0,
\end{align*}
as $D_n^{(i)}$ converge in $*$-distribution as $n\to \infty$.
Similarly, for one self-matching and one non-self-matching among $i$'s and $j$'s, we have 
\begin{align*}
\lim_{n\to \infty}\frac{1}{n^3}
\sum_{I_4,J_4}\d_{i_1i_2}\d_{j_1j_2}\d_{i_3j_4}\d_{j_3i_4}\prod_{1}^4D^{(\ell)}_{j_{\ell}i_{\ell+1}}
=&\lim_{n\to \infty}\frac{1}{n^3}\sum_{i_1,j_1,j_3,j_4}D^{(1)}_{j_1i_1}D^{(2)}_{j_1j_4}D^{(3)}_{j_3j_3}D^{(1)}_{j_4i_1}
\\=\lim_{n\to \infty}\frac{1}{n^3}\l\{\tr(D_n^{(1)*}D_n^{(2)}D_n^{(4)})\tr(D_n^{(3)})\cdot n^2\r\}=&0.
\end{align*}
In the renaming case, there is no self-matching among $i$'s and $j$'s. Then 
\begin{align*}
&\lim_{n\to \infty}\frac{1}{n^3}
\sum_{I_4,J_4}\d_{i_1j_2}\d_{j_1i_2}\d_{i_3j_4}\d_{j_3i_4}\prod_{1}^4D^{(\ell)}_{j_{\ell}i_{\ell+1}}
=\lim_{n\to \infty}\frac{1}{n^3}\sum_{j_1,j_2,j_3,j_4}D^{(1)}_{j_1j_3}D^{(2)}_{j_2j_2}D^{(3)}_{j_3j_1}D^{(1)}_{j_4j_4}
\\=&\lim_{n\to \infty}\frac{1}{n^3}\l\{\tr(D_n^{(1)}D_n^{(3)})\tr(D_n^{(2)})\tr(D_n^{(4)})\cdot n^3\r\}=\vp(d_1d_3)\vp(d_2)\vp(d_4).
\end{align*}
The calculations show that the limit is non-zero when there is no self-matching among $i$'s and $j$'s.  In general, Lemma \ref{lem:forfreeness} says that the limit is non-zero when there is no self-matching among $i$'s and $j$'s.

\begin{proof}[Proof of Lemma \ref{lem:forfreeness}]
We use induction. First suppose $k=1$. Then  
\begin{align*}
\lim_{n\to \infty}\frac{1}{n^2} \sum_{I_{2},J_2}\d_{i_1i_2}\d_{j_1j_2}D^{(1)}_{j_1i_2}D^{(2)}_{j_2i_1}
&=\lim_{n\to \infty}\frac{1}{n^2}\sum_{i_1,j_1}D^{(1)}_{j_1i_1}D^{(2)}_{j_1i_1}
\\&=\lim_{n\to \infty}\frac{1}{n^2}\cdot \tr(D^{(1)}D^{(2)*}).n=0,
\end{align*}
since  $\tr(D^{(1)}D^{(2)*})\stackrel{n\to \infty}{\to}\vp(d_1d_2^*)$ from the fact that $\{D^{(1)},D^{(2)}\}\stackrel{*\mbox{-dist}}{\to}\{d_1,d_2\}$.

Next we show \eqref{eqn:forfree} holds for $k+1$ if it holds  for $k$.
Let $\pi\in \mathcal P_2(2(k+1))$ and $\pi=(r_1,s_1)\cdots (r_{k+1},s_{k+1})$, with the convention that $r_i \leq  s_i$ for all $i$.
Note that there is at least one self-matching pair in $\{i_1,\ldots, i_{2k+2}\}$ and $\{j_1,\ldots, j_{2k}\}$. Without loss of generality we assume that the self-matching in $(r_1,s_1)$ pairs, i.e.,  $u_{\ell_1}'=\d_{i_{r_1}i_{s_1}}\d_{j_{r_1}j_{s_1}}=1$. Then
\begin{align}\label{eqn:free1}
&\lim_{n\to \infty}\frac{1}{n^{k+2}}\sum_{I_{2k+2},J_{2k+2}}\sum_{t=1}^{k+1}\sum_{S_t}u_{\ell_1}'\cdots u_{\ell_t}'v_{\ell_{t+1}}'\cdots v_{\ell_{k+1}}'
\prod_{\ell=1}^{2k+2}D^{(\ell)}_{j_{\ell}i_{\ell+1}}
\\=&\lim_{n\to \infty}\frac{1}{n^{k+2}}\sum_{I_{2k+2},J_{2k+2}}\sum_{t=1}^{k+1}\sum_{S_t}\d_{i_{r_1}i_{s_1}}
\d_{j_{r_1}j_{s_1}}u_{\ell_2}'\cdots u_{\ell_t}'v_{\ell_{t+1}}'\cdots v_{\ell_{k+1}}'
\prod_{\ell=1}^{2k+2}D^{(\ell)}_{j_{\ell}i_{\ell+1}}.\nonumber
\end{align}
{\bf Case I:} Let  $r_1=s_1-1$ (similar for $r_1=1$ and $s_1=2k$). Then we have 
\begin{align*}
\sum_{i_{r_1},i_{s_1}, j_{r_1}, j_{s_1}}\d_{i_{r_1}i_{s_1}}\d_{j_{r_1}j_{s_1}}D^{(r_1-1)}_{j_{r_1-1}i_{r_1}}
D^{(r_1)}_{j_{r_1}i_{r_1+1}}D^{(s_1)}_{j_{s_1}i_{s_1+1}}
&=\sum_{i_{r_1},j_{r_1}}D^{(r_1-1)}_{j_{r_1-1}i_{r_1}}
D^{(r_1)*}_{i_{r_1}j_{r_1}}D^{(s_1)}_{j_{r_1}i_{s_1+1}}
\\&=(D^{(r_1-1)}D^{(r_1)*}
D^{(s_1)})_{j_{r_1-1}i_{s_1+1}}.
\end{align*}
Since $D^{(r_1-1)}D^{(r_1)*}D^{(s_1)}$ is of the form $D^{\tau_1}\cdots D^{\tau_q}$ for some positive integer $q$ and $\tau_1,\ldots, \tau_q\in \{0,1,*\}$. Renaming rest of the $2k$ variables of $i$'s,  $j$'s and $D^{i}$'s,  \eqref{eqn:free1} becomes of the form 
\begin{align}\label{eqn:free2}
\lim_{n\to \infty}\frac{1}{n^{k+2}}\sum_{I_{2k},J_{2k}}\sum_{t=0}^{k}\sum_{S_t}u_{\ell_1}'\cdots u_{\ell_t}'v_{\ell_{t+1}}'\cdots v_{\ell_{k}}'
\prod_{\ell=1}^{2k}D^{(\ell)}_{j_{\ell}i_{\ell+1}}.
\end{align}
Let $t\ge 1$, i.e., there exists at least one self-matching within $i$'s and $j$'s. Therefore the limit \eqref{eqn:free2} is zero by the induction hypothesis. Let  $t=0$, i.e., there  is no self-matching in $i$'s or $j$'s. Then also the limit \eqref{eqn:free2} is zero because the maximum number of blocks among $D^{(i)}$'s is $k+1$, using Fact \ref{ft:cardianlity}.

\noindent {\bf Case II:} Let $r_1\neq s_1-1$ (except for $r_1=1$ and $s_1=2k$). Then we have 
\begin{align*}
&\sum_{i_{r_1},i_{s_1}, j_{r_1}, j_{s_1}}\d_{i_{r_1}i_{s_1}}\d_{j_{r_1}j_{s_1}}D^{(r_1-1)}_{j_{r_1-1}i_{r_1}}
D^{(s_1-1)}_{j_{s_1-1}i_{s_1}}
D^{(r_1)}_{j_{r_1}i_{r_1+1}}D^{(s_1)}_{j_{s_1}i_{s_1+1}}
\\&=(D^{(r_1-1)}D^{(s_1-1)*})_{j_{r_1-1}j_{s_1-1}}(D^{(r_1)*}D^{(s_1)})_{i_{r_1+11}i_{s_1+11}}.
\end{align*}
Therefore four matrices reduced to two matrices, each of the form $D^{\tau_1}\cdots D^{\tau_q}$ for some positive integer $q$ and $\tau_1,\ldots, \tau_q\in \{0,1,*\}$. Again we have reduction of four variables $i_{r_1},i_{s_1},j_{r_1},j_{s_1}$. By renaming the variables properly, the expression \eqref{eqn:free1} reduces to the form of \eqref{eqn:free2}, and which is zero by the previous arguments, i.e.,
\begin{align*}
\lim_{n\to \infty}\frac{1}{n^{k+2}}\sum_{I_{2k+2},J_{2k+2}}\sum_{t=1}^{k+1}\sum_{S_t}u_{\ell_1}'\cdots u_{\ell_t}'v_{\ell_{t+1}}'\cdots v_{\ell_{k+1}}'
\prod_{\ell=1}^{2k+2}D^{(\ell)}_{j_{\ell}i_{\ell+1}}=0.
\end{align*}
This completes the proof by the principle of induction.
\end{proof}

\section{Proofs of Theorem \ref{thm:rectangle} and Theorem \ref{thm:assymrect}}\label{sec:rect}
 The following result will be used in the proof of Theorem \ref{thm:rectangle}. This result holds in more general settings. However the restricted version is enough for our purpose. For its proof we refer to Theorem 5.5 in \cite{bosehazra}.

\begin{result}\label{res:rectg}
Suppose $Y_{p\times n} $ is a $p\times n$ rectangular matrix whose entries are i.i.d $N(0,1)$ and  
$\frac{p}{n}\to y>0$ as $p\to \infty$. Then the ESD $\mu_p$ of 
$\bar Y_p=\frac{1}{n}Y_{p\times n}Y_{n\times p}^*$ 
converges to the Mar\v cenko-Pastur law, $\mu$ given by 
\begin{align*}
\mu(A)=\l\{ \begin{array}{lr}
(1-\frac{1}{y})_{0\in A}+\nu(A) & \mbox{if $y>1$},\\
\nu(A) & \mbox{if $y\le 1$},
\end{array} \r.
\end{align*} 
where 
$$
d\nu(x)=\l\{\begin{array}{ll}
\frac{1}{2\pi xy}\sqrt{(b-x)(x-a)} & \mbox{if $a\le x \le b$},
\\ 0 & \mbox{otherwise}
\end{array} \r.
$$
where $a=a(y)=(1-\sqrt y)^2$ and $b=b(y)=(1+\sqrt y)^2$.
\end{result}

\begin{proof}[Proof of Theorem \ref{thm:rectangle}]
By Result \ref{res:rectg} it is enough to show that, for all $k$, 
 \begin{align}\label{eqn:rect00}
 \lim_{p\to\infty} \frac{1}{p}\E[\Tr (\bar X_p^k)]=\lim_{p\to \infty} \frac{1}{p}\E[\Tr(\bar Y_p^k)]. 
 \end{align}
 The the left side of \eqref{eqn:rect00} equals
\begin{align}\label{eqn:rect0}
\frac{1}{p}\E[\Tr(\bar X_p^k)]=\frac{1}{p.n^k}\sum_{I_{2k}'}\E[X_{i_1i_2}^{\e_1}X_{i_2i_3}^{\e_2}\ldots X_{i_{2k}i_1}^{\e_{2k}}]
\end{align}
where $I_{2k}'=\{(i_1,\ldots, i_{2k})\; :\; 1\le i_{2m} \le n, 1\le i_{2m-1}\le p, m=1,\ldots, k\}$, $\e_{2m}=*$ and $\e_{2m-1}=1$ for $m=1,\ldots,k$.  Using the same arguments as in the proof of Theorem \ref{thm:general} we have 
\begin{align*}
\frac{1}{p.n^k}\sum_{I_{2k}'}\E[X_{i_1i_2}^{\e_1}X_{i_2i_3}^{\e_2}\ldots X_{i_{2k}i_1}^{\e_{2k}}]=\frac{1}{p.n^k}\sum_{I_{2k}'}\sum_{\pi\in \mathcal P_2(2k)}\prod_{(r,s)\in \pi}\E[X_{i_ri_{r+1}}^{\e_r}X_{i_si_{s+1}}^{\e_s}]+o(1).
\end{align*}
Using the properties of the random variables, we have 
\begin{align*}
\E[X_{i_ri_{r+1}}^{\e_r}X_{i_si_{s+1}}^{\e_s}]=&(\d_{i_ri_s}\d_{i_{r+1}i_{s+1}}+\rho \d_{i_ri_{s+1}}\d_{i_{r+1}i_{s}}\d_{\{i_r,i_{r+1}\le \min{\{p,n\}}\}})\d_{\e_r\e_s}
\\& + (\d_{i_ri_{s+1}}\d_{i_{r+1}i_{s}}+\rho \d_{i_ri_s}\d_{i_{r+1}i_{s+1}}\d_{\{i_r,i_{r+1}\le \min{\{p,n\}}\}})(1-\d_{\e_r\e_s})
\\ =&(\d_{\e_r\e_s}+\rho(1-\d_{\e_r\e_s})\d_{\{i_r,i_{r+1}\le \min{\{p,n\}}\}})\d_{i_ri_s}\d_{i_{r+1}i_{s+1}}
\\& +(\rho \d_{\e_r\e_s}\d_{\{i_r,i_{r+1}\le \min{\{p,n\}}\}}+(1-\d_{\e_r\e_s}))\d_{i_ri_{s+1}}\d_{i_{r+1}i_{s}}
\\=&g(r,s)+f(r,s), \mbox{ say.}
\end{align*}
 Note that $g(r,s)\le \d_{i_ri_s}\d_{i_{r+1}i_{s+1}}$ and $f(r,s)\le \d_{i_ri_{s+1}}\d_{i_{r+1}i_{s}}$. Therefore, as $p/n\to  y>0$, by Lemma \ref{lem:wigner}, we have 
\begin{align}\label{eqn:rect1}
\lim_{p\to \infty}\frac{1}{p.n^k}\sum_{I_{2k}'}\E[X_{i_1i_2}^{\e_1}X_{i_2i_3}^{\e_2}\ldots X_{i_{2k}i_1}^{\e_{2k}}]&=\lim_{p\to \infty}\frac{1}{p.n^k}\sum_{I_{2k}'}\sum_{\pi\in \mathcal P_2(2k)}\prod_{(r,s)\in \pi}f(r,s)\nonumber
\\&=\sum_{\pi \in NC_2(2k)}\lim_{n\to \infty}\frac{1}{p.n^k}\sum_{I_{2k}'}\prod_{(r,s)\in \pi}f(r,s).
\end{align}
If $\pi\in NC_2(2k)$ then $\d_{\e_r\e_s}=0$ for $(r,s)\in \pi$, as one of $r$ and $s$ appears at an  odd place and the other one appears at an  even place. Therefore 
$$
f(r,s)=\d_{i_ri_{s+1}}\d_{i_{r+1}i_{s}},\;\; \mbox{ for $(r,s)\in \pi$}.
$$
Therefore, for $\pi\in NC_2(2k)$, we get 
\begin{align*}
\prod_{(r,s)\in\pi}f(r,s)=\prod_{r=1}^{2k}\d_{i_ri_{\gamma\pi(r)}}.
\end{align*}
Therefore from \eqref{eqn:rect0} and \eqref{eqn:rect1} we get 
\begin{align}\label{eqn:22}
\lim_{p\to \infty}\frac{1}{p}\E[\Tr(\bar X_p^k)]=\sum_{\pi \in NC_2(2k)}\lim_{n\to \infty}\frac{1}{p.n^k}\sum_{I_{2k}'}\prod_{r=1}^{2k}\d_{i_ri_{\gamma\pi(r)}}.
\end{align}

On the other hand, similar to \eqref{eqn:rect0}, the right side of \eqref{eqn:rect00} equals
\begin{align*}
\frac{1}{pn^k}\sum_{I_{2k}'}\E[Y_{i_1i_2}^{\e_1}\cdots Y_{i_{2k}i_1}^{\e_{2k}}]=\frac{1}{pn^k}\sum_{I_{2k}'}
\sum_{\pi\in \mathcal P_2(2k)}\prod_{(r,s)\in\pi}\E[Y_{i_ri_{r+1}}^{\e_r}Y_{i_si_{s+1}}^{\e_s}].
\end{align*}
The last equality follows from Wick's formula. Since $Y_{ij}$ are i.i.d. $N(0,1)$,
\begin{align*}
\E[Y_{i_ri_{r+1}}^{\e_r}Y_{i_si_{s+1}}^{\e_s}]=\d_{i_ri_s}\d_{i_{r+1}i_{s+1}}
\d_{\e_r\e_s}+\d_{i_ri_{s+1}}\d_{i_si_{r+1}}(1-\d_{\e_r\e_s}).
\end{align*}
Therefore by Lemma \ref{lem:wigner} we get 
\begin{align*}
&\lim_{p\to \infty}\frac{1}{pn^k}\sum_{I_{2k}'}
\sum_{\pi\in \mathcal P_2(2k)}\prod_{(r,s)\in\pi}\E[Y_{i_ri_{r+1}}^{\e_r}Y_{i_si_{s+1}}^{\e_s}]
\\=&\lim_{p\to \infty}\frac{1}{pn^k}\sum_{I_{2k}'}
\sum_{\pi\in  NC_2(2k)}\prod_{(r,s)\in\pi}\d_{i_ri_{s+1}}\d_{i_si_{r+1}}(1-\d_{\e_r\e_s}).
\end{align*}
Again $\d_{\e_r\e_s}=0$ when $(r,s)\in\pi\in NC_2(2k)$. Therefore we have
\begin{align}\label{eqn:21}
\lim_{p\to \infty}\frac{1}{p}\E[\Tr(\bar Y_p^k)]=\sum_{\pi\in NC_2(2k)}\lim_{n\to \infty}\frac{1}{p.n^k}\sum_{I_{2k}'}\prod_{r=1}^{2k}\d_{i_ri_{\gamma\pi(r)}}.
\end{align}
Comparing the right sides of \eqref{eqn:22} and \eqref{eqn:21} we get
\eqref{eqn:rect00}. This completes the proof.
\end{proof}

\begin{proof}[Proof of Theorem \ref{thm:assymrect}]
 Let $\bar Y_p^{(1)},\ldots, \bar Y_p^{(m)}$ be $m$ independent copies of $\bar Y_p$. Then $\bar Y_p^{(1)}, \ldots, \bar Y_p^{(m)} $ are asymptotically free (see \cite{capitaine}). Using this fact we show that $\bar X_p^{(1)},\ldots, \bar X_p^{(m)}$ are asymptotically free.
For simplicity we show that $\bar X_p^{(1)}$ and $\bar X_p^{(2)}$ are asymptotically free. It is enough to show that  
\begin{align*}
\lim_{p\to \infty}\vp_n(\bar X_p^{(\tau_1)}\cdots \bar X_p^{(\tau_k)})=\lim_{p\to \infty}\vp_n(\bar Y_p^{(\tau_1)}\cdots \bar Y_p^{(\tau_k)}),
\end{align*}
for $\tau_1,\tau_2,\ldots, \tau_k\in \{1,2\}$. We have
\begin{align*}
\vp_n(\bar X_p^{(\tau_1)}\cdots \bar X_p^{(\tau_k)})&=\frac{1}{p}\E \Tr[\bar X_p^{(\tau_1)}\cdots \bar X_p^{(\tau_k)}]
\\&=\frac{1}{pn^k}\E\Tr[X^{(\tau_1')\e_1}X^{(\tau_2')\e_2}\cdots X^{(\tau_{2k}')\e_{2k}}],
\end{align*}
where $\tau_{2m-1}'=\tau_{2m}'=\tau_m \in \{1,2\}$, $e_{2m-1}=1$ and $e_{2m}=*$ for $m=1,\ldots,k$. Therefore by the trace formula we get 
\begin{align*}
\frac{1}{pn^k}\E\Tr[X^{(\tau_1')\e_1}X^{(\tau_2')\e_2}\cdots X^{(\tau_{2k}')\e_{2k}}]&=\frac{1}{pn^k}\sum_{I_{2k}'}\E[X_{i_1i_2}^{(\tau_1')\e_1}\cdots X_{i_{2k}i_1}^{(\tau_{2k}')\e_{2k}}]
\\&= \frac{1}{pn^k}\sum_{I_{2k}'}\sum_{\pi\in \mathcal P_2(2k)}\prod_{(r,s)\in \pi}\E[X_{i_ri_{r+1}}^{(\tau_r')\e_r}X_{i_si_{s+1}}^{(\tau_s')\e_s}]+o(1),
\end{align*}
where $I_{2k}'=\{(i_1,\ldots, i_{2k})\; :\; 1\le i_{2m} \le n, 1\le i_{2m-1}\le p, m=1,\ldots, k\}$.
Using the same arguments  as in the proof of Theorem \ref{thm:rectangle} we get 
\begin{align}\label{eqn:aa'}
\lim_{p\to \infty}\vp_n(\bar X_p^{(\tau_1)}\cdots \bar X_p^{(\tau_k)})=\sum_{\pi\in NC_2(2k)}\lim_{p\to \infty}\frac{1}{pn^k}\sum_{I_{2k}'} \prod_{(r,s)\in \pi}\d_{i_ri_{s+1}}\d_{i_si_{r+1}}\d_{\tau_r'\tau_s'}.
\end{align} 
 On the other hand we have 
\begin{align*}
\vp_n(\bar Y_p^{(\tau_1)}\cdots \bar Y_p^{(\tau_k)})&=\frac{1}{p}\E \Tr[\bar Y_p^{(\tau_1)}\cdots \bar Y_p^{(\tau_k)}]
=\frac{1}{pn^k}\E\tr[Y^{(\tau_1')\e_1}Y^{(\tau_2')\e_2}\cdots Y^{(\tau_{2k}')\e_{2k}}]
\\&=\frac{1}{pn^k}\sum_{I_{2k}'}\E[Y_{i_1i_2}^{(\tau_1')\e_1}\cdots Y_{i_{2k}i_1}^{(\tau_{2k}')\e_{2k}}]
\\&= \frac{1}{pn^k}\sum_{I_{2k}'}\sum_{\pi\in \mathcal P_2(2k)}\prod_{(r,s)\in \pi}\E[Y_{i_ri_{r+1}}^{(\tau_r')\e_r}Y_{i_si_{s+1}}^{(\tau_s')\e_s}].
\end{align*}
Again, as the elements are i.i.d. $N(0,1)$, we have 
\begin{align*}
\E[Y_{i_ri_{r+1}}^{(\tau_r')\e_r}Y_{i_si_{s+1}}^{(\tau_s')\e_s}]=(\d_{i_ri_s}\d_{i_{r+1}i_{s+1}}
\d_{\e_r\e_s}+\d_{i_ri_{s+1}}\d_{i_si_{r+1}}(1-\d_{\e_r\e_s}))\d_{\tau_r'\tau_s'}.
\end{align*}
Using Lemma \ref{lem:wigner} and the arguments as in the proof of Theorem \ref{thm:rectangle} we get 
\begin{align}\label{eqn:bb'}
\lim_{p\to \infty}\vp_n(\bar Y_p^{(\tau_1)}\cdots \bar Y_p^{(\tau_k)})=\sum_{\pi\in NC_2(2k)}\lim_{p\to \infty}\frac{1}{pn^k}\sum_{I_{2k}'} \prod_{(r,s)\in \pi}\d_{i_ri_{s+1}}\d_{i_si_{r+1}}\d_{\tau_r'\tau_s'}.
\end{align}
Therefore from \eqref{eqn:aa'} and \eqref{eqn:bb'}, for  $\tau_1,\ldots,\tau_k\in \{1,2\}$, we have 
\begin{align*}
\lim_{p\to \infty}\vp_n(\bar X_p^{(\tau_1)}\cdots \bar X_p^{(\tau_k)})=\lim_{p\to \infty}\vp_n(\bar Y_p^{(\tau_1)}\cdots \bar Y_p^{(\tau_k)}).
\end{align*}
Therefore $(\bar X_p^{(1)}, \bar X_{p}^{(2)})$ and $(\bar Y_p^{(1)}, \bar Y_{p}^{(2)})$ jointly converge to the same limit. So $\bar X_p^{(1)}$ and $\bar X_p^{(2)}$ are asymptotically free, as $\bar Y_p^{(1)}$ and $\bar Y_p^{(2)}$ are asymptotically free.  Using the same arguments it can be shown that $\bar X_p^{(1)},\ldots, \bar X_{p}^{(k)}$ are asymptotically free.
\end{proof}

\section{Proof of Theorem \ref{thm:brown}}\label{sec:brown}

In this section we give the proof of Theorem \ref{thm:brown}. The following lemma links products of circular elements and of elliptic elements. Its proof is given later. 

\begin{lemma}\label{lem:distribution}
Let $e_1,\ldots, e_k$ be $k$ free elliptic elements with parameters $0\le\rho_1,\ldots, \rho_k\le 1$ respectively and $c_1,\ldots,c_k$ be $k$ free circular elements. 
Then $e_1\cdots e_k e_k^*\cdots e_1^*$ and $c_1\cdots c_kc_k^*\cdots c_1^*$ have the same distribution.
\end{lemma}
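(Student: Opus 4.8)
The statement to prove is that $e_1\cdots e_k e_k^*\cdots e_1^*$ and $c_1\cdots c_k c_k^*\cdots c_1^*$ have the same distribution, where the $e_i$ are free elliptic with parameters $\rho_i\in[0,1]$ and the $c_i$ are free circular. Since both operators are positive (of the form $TT^*$), it suffices to match all moments $\vp\big((e_1\cdots e_k e_k^*\cdots e_1^*)^m\big)=\vp\big((c_1\cdots c_k c_k^*\cdots c_1^*)^m\big)$ for every $m\ge 1$. I would compute both sides with the moment--free-cumulant formula (Fact \ref{ft:moment-cumulant}): expand each moment as a sum over non-crossing partitions $\pi\in NC(2km)$ of a product of free cumulants of the individual $e_i$ (resp. $c_i$), using that cumulants of free families vanish on mixed blocks, so only blocks lying entirely within a single colour $i$ survive.

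The key observation is the effect of the elliptic parameter. For a single elliptic element $e$ with parameter $\rho$, the only nonzero cumulants are $\kappa_2(e,e)=\kappa_2(e^*,e^*)=\rho$ and $\kappa_2(e,e^*)=\kappa_2(e^*,e)=1$; for a circular element the only nonzero cumulants are $\kappa_2(c,c^*)=\kappa_2(c^*,c)=1$. So in both expansions only \emph{pair} partitions contribute, and a block pairing two slots of colour $i$ contributes $1$ if the two slots carry opposite adjoints ($e$ with $e^*$) and $\rho_i$ (resp. $0$) if they carry the same adjoint. The heart of the argument is therefore combinatorial: in the specific word $w=e_1\cdots e_k e_k^*\cdots e_1^*$ raised to the $m$-th power, I claim that \emph{every non-crossing pair partition which pairs only same-colour slots and is compatible with the word structure must in fact pair each $e_i$ with an $e_i^*$} — i.e. the ``$\rho_i$'' (same-adjoint) pairings within a colour are all crossing, hence forbidden in $NC$. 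Granting this claim, every surviving $\pi$ contributes the same weight $1$ to both sides, the sets of surviving $\pi$ coincide, and the two moments are equal.

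To establish the claim I would use the palindromic/mirror structure of $w^m$: in $w=e_1\cdots e_k e_k^*\cdots e_1^*$ the un-starred block is in increasing colour order and the starred block is its reverse, and this pattern repeats $m$ times. Within one copy of $w$, the colour-$i$ letters are $e_i$ (at position $i$) and $e_i^*$ (at position $2k+1-i$), and the letters of colours $>i$ are \emph{nested} strictly between them while letters of colours $<i$ are outside; a same-adjoint pairing of colour $i$ across two different copies of $w$ would then have to cross the $e_j,e_j^*$ pair for intermediate $j$, contradicting non-crossing — and within a single copy there is only one $e_i$ and one $e_i^*$, so no same-adjoint pair is even available. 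I would make this precise by an induction on $k$ peeling off colour $1$ (the outermost), exactly the bracketing structure that also underlies the nested-bracket description of $NC$. This combinatorial rigidity step is the main obstacle; once it is in place, the rest is bookkeeping with Fact \ref{ft:moment-cumulant}. An alternative, perhaps cleaner route is to realise $e_i=\sqrt{\tfrac{1+\rho_i}{2}}\,s_{1,i}+i\sqrt{\tfrac{1-\rho_i}{2}}\,s_{2,i}$ with all $s$'s free semicircular, expand $e_1\cdots e_k e_k^*\cdots e_1^*$ directly, and check that the parameter-dependent cross terms cancel in $\vp\big((\,\cdot\,)^m\big)$ by the same parity/non-crossing mechanism; I would mention this as a remark but carry out the cumulant argument as the main proof.
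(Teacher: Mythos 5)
Your overall strategy matches the paper's: reduce to matching all moments, expand by the moment--free-cumulant formula so that (since all cumulants of order $\ne 2$ vanish) only non-crossing \emph{pair} partitions contribute, use freeness to restrict to same-colour pairs, and then argue that every surviving pair must carry opposite adjoints, so that each block contributes $\kappa_2(e_i,e_i^*)=1=\kappa_2(c_i,c_i^*)$ and the two expansions coincide. The place where you diverge from the paper --- and where your write-up has a genuine gap --- is in how you establish that last combinatorial claim. Your proposed reasoning (``a same-adjoint pairing of colour $i$ across two copies of $w$ would have to cross the $e_j,e_j^*$ pair for intermediate $j$'') is not yet an argument: it presupposes that those intermediate colours are themselves paired in the nested way, which is exactly the structure of $\pi$ you have not yet controlled, and the ``peel off colour $1$'' induction is not straightforward to set up across the $m$ copies of $w$. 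You flag this yourself as ``the main obstacle'', but you do not resolve it.

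The paper's proof dispatches the claim with a short parity observation that sidesteps all of the nesting bookkeeping. In the word $w=e_1\cdots e_k e_k^*\cdots e_1^*$ the letter $e_j$ sits at position $j$ and $e_j^*$ at position $2k+1-j$, and these have opposite parity; raising to the $n$-th power only shifts positions by multiples of $2k$, which preserves parity. Hence in $w^n$ every occurrence of $e_j$ has one fixed parity and every occurrence of $e_j^*$ the other. On the other hand, for any $(r,s)\in\pi\in NC_2(2nk)$ with $r<s$, the set $\{r+1,\dots,s-1\}$ must be a union of blocks of $\pi$, so $s-r$ is odd and $r,s$ have opposite parity. Combining these two facts, any same-colour block of $\pi$ is automatically opposite-adjoint, and each block contributes $1$ in both the elliptic and the circular expansion. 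I would replace your nesting/induction step by this parity argument; the rest of your outline is then correct and complete.
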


Two well known results will be used in the proof of Theorem \ref{thm:brown}. The first result gives the $S$-transform of $cc^*$ where $c$ is a circular element. The {\it $S$-transform} of $a\in \mathcal A$ is defined by $S_a(z):=\frac{1}{z}R_a^{<-1>}(z)$, where $R_a(z)=\sum_{n=1}^{\infty}\kappa_n(a,\ldots, a)z^n$ and $f^{<-1>}$ denotes the  inverse of $f$ under the composition mapping (see \cite{speicherbook}, page 294). 

\begin{result}\label{ft:brown}
Let $c$ be a standard circular element in an NCP $(\mathcal A, \vp)$. Then the $S$ transform of $cc^*$ is given by
\begin{align*}
S_{cc^*}(z)=\frac{1}{1+z}.
\end{align*}
\end{result}

The second result gives the Brown measure of any $R$-diagonal element. An element $a\in \mathcal A$ is called {\it $R$-diagonal} if $\kappa_n(a_1,\ldots, a_n)=0$ for all $n\in \N$ whenever the arguments $a_1,\ldots, a_n\in \{a,a^*\}$ are not alternating in $a$ and $a^*$.  For more details on $R$-diagonal elements we refer to Lecture 15 in \cite{speicherbook}.

\begin{result}\label{res:brownR}
Suppose $x$ is an $R$-diagonal element. Then its Brown measure $\mu_x$ is rotationally invariant and can be described by the probabilities  
\begin{align*}
\mu_x(\{\lambda\in \C \;:\; |\lambda|\le t\})=\left\{\begin{array}{lcr}
0 & \mbox{for} & t\le \frac{1}{\sqrt{\vp(xx^*)^{-1}}}\\
1+S_{xx^*}^{<-1>}(t^{-2}) & \mbox{for} & \frac{1}{\sqrt{\vp(xx^*)^{-1}}}\le t \le \sqrt{\vp(xx^*)}\\
1 & \mbox{for } & t\ge \sqrt{\vp(xx^*)},
\end{array} \right.
\end{align*}
where $f^{<-1>}$ denotes the  inverse of $f$ under the composition mapping.
\end{result}

For proofs of Results \ref{ft:brown} and \ref{res:brownR}, we refer the reader to  \cite{belinschi}, \cite{haagerup}.

\begin{proof}[Proof of Theorem \ref{thm:brown}]
It is known that the product of free elliptic elements is $R$-diagonal (see \cite{soshnikov}, p. 9). Therefore $e_1\cdots e_k$ is an $R$-diagonal element for $k\ge 2$ and hence its Brown measure is determined by the distribution of $e_1\cdots e_ke_k^*\cdots e_1^*$. By Lemma \ref{lem:distribution}, this distribution is same as the distribution of $c_1\cdots c_kc_k^*\cdots c_1^*$. Let $S_k$ denote the S-transform of $c_1\cdots c_kc_k^*\cdots c_1^*$. By Result \ref{ft:brown} and freeness we have  
\begin{align*}
S_k(z)=S_{c_1c_1^*}(z)\cdots S_{c_kc_k^*}(z)=\l(\frac{1}{1+z}\r)^k,
\end{align*}
Therefore, for $t>0$, we get 
\begin{align}\label{eqn:brown}
S_k^{<-1>}(t)=1-t^{-\frac{1}{k}}.
\end{align} 
Clearly, $\vp(c_1\cdots c_kc_k^*\cdots c_1^*)=1$ and $\vp((c_1\cdots c_k c_k^*\cdots c_1^*)^{-1})=\infty$, as $c_ic_i^*$ has the quarter-circle distribution. Therefore using \eqref{eqn:brown} in Result \ref{res:brownR} we have 
\begin{align*}
\mu_k(\{z\;:\; |z|\le t\})=\l\{\begin{array}{ll}
t^{\frac{2}{k}} & \mbox{for $t\le 1$}\\
1 & \mbox{for $t\ge 1$}.
\end{array} \r.
\end{align*}
Hence the result.
\end{proof}

It remains to prove Lemma \ref{lem:distribution}.

\begin{proof}[Proof of Lemma \ref{lem:distribution}]
Since $e_1\cdots e_ke_k^*\cdots e_1^*$ and $c_1\cdots c_kc_k^*\cdots c_1^*$ are self-adjoint elements, it is enough to show that all their moments agree. From the proof of Lemma \ref{lem:moments of e}, we have $\kappa_2(e,e)=\kappa_2(e^*,e^*)=\rho$ and $\kappa_2(e,e^*)=\kappa_2(e^*,e)=1$. It is  easy to see that $\kappa_2(c,c)=\kappa_2(c^*,c^*)=0$ and $\kappa_2(c,c^*)=\kappa_2(c^*,c)=1$. We write 
\begin{align*}
(e_1\cdots e_ke_k^*\cdots e_1^*)^n=e_{\tau_1}^{\e_1}e_{\tau_2}^{\e_2}\cdots e_{\tau_{2kn}}^{\e_{2kn}},
\end{align*}
where $\tau_{2mk+j}=\tau_{2(m+1)k-j+1}=j$ , $e_{2mk+j}=1$ and  $e_{(2m+1)k+j}=*$ for $j=1,\ldots,k$ and $m=0,\ldots,n-1$.

For positive integer $n$, applying the moment-free cumulant formula, as the higher order free cumulants are zero, we have
\begin{align*}
\vp((e_1\cdots e_ke_k^*\cdots e_1^*)^n)=\sum_{\pi\in NC_{2}(2nk)}\prod_{(r,s)\in \pi}\kappa_2(e_{\tau_r}^{\epsilon_r},e_{\tau_s}^{\epsilon_s}),
\end{align*}
where $\tau_r,\tau_s\in \{1,\ldots, k\}$ and $\e_r, \e_s\in \{1,*\} $. Observe that if $(r,s)\in \pi\in NC_2(2nk)$, then one of $\{r,s\}$ is odd and the other is even. Further, if $e_{\tau_r}$ appears at an odd place then $e_{\tau_r}^*$ appears only at an even place and vice-versa. Indeed, consider the word $e_1\cdots e_k e_k^*\cdots e_1^*$. Then $e_r$ appears at the $r$-th place and $e_r^*$ appears at the $(2k+1-r)$-th place. Clearly if $e_r$ appears at an odd place then $e_r^*$ appears at an even place and vice-versa. Therefore $\kappa_2(e_{\tau_r}^{\epsilon_r},e_{\tau_s}^{\epsilon_s})$ will be of the form either $\kappa_2(e,e^*)$ or $\kappa_2(e^*,e)$ and in both cases equal to $1$. Thus, as the mixed free cumulants are same for circular and elliptic elements, we have 
\begin{align*}
\vp((e_1\cdots e_ke_k^*\cdots e_1^*)^n)=\sum_{\pi\in NC_{2}(2nk)}\prod_{(r,s)\in \pi}\kappa_2(c_{\tau_r}^{\epsilon_r},c_{\tau_s}^{\epsilon_s})=\vp((c_1\cdots c_kc_k^*\cdots c_1^*)^n).
\end{align*}
This completes the proof.
\end{proof}

\bibliography{bibtex}

\bibliographystyle{amsplain}

\end{document}